\renewcommand\aa{a}
\newcommand\bb{b}
\newcommand\cc{c}
\newcommand{\chart}{\mathsf{char}}
\newcommand\dd{d}
\newcommand\hh{h}
\newcommand\HS[1]{\leavevmode\null\hspace{#1mm}}
\newcommand\Id[1]{\mathsf{Id}(#1)}
\newcommand\ii{i}
\newcounter{ITEM}
\newcommand\jj{j}
\newcommand\mm{m}
\newcommand\mA{\mathcal{A}}
\newcommand\mN{\mathcal{N}}
\newcommand\nn{n}
\newcommand\pp{p}
\newcommand\qq{q}
 \newcommand\Span{\mathsf{span}}
\newcommand\wdots{, ...\HS{0.2}, }
\newcommand\xx{x}
\newcommand\yy{y}
\newcommand\zz{z}
\def\WEN{\color{black}}
\newtheorem{thm}{Theorem}[section]
\newtheorem{cor}[thm]{Corollary}
\newtheorem{lem}[thm]{Lemma}
\theoremstyle{definition}
\newtheorem{defn}[thm]{Definition}
\newtheorem{exas}[thm]{Example}
\let\phi\varphi
\begin{document}
\title{LIE NILPOTENT NOVIKOV ALGEBRAS AND LIE SOLVABLE
LEAVITT PATH ALGEBRAS}
\maketitle
\begin{center}
Zerui Zhang \footnote{School of Mathematical Sciences, South China Normal University, Guangzhou 510631, P. R. China. E-mail address: \texttt{zeruizhang@scnu.edu.cn}} and
Tran Giang Nam\footnote{Institute of Mathematics, VAST, 18 Hoang Quoc Viet, Cau Giay, Hanoi, Vietnam. E-mail address: \texttt{tgnam@math.ac.vn}}
\end{center}

\begin{abstract} In this paper, we first  study properties of the lower central chains for Novikov algebras. Then we show that for every Lie nilpotent Novikov algebra~$\mathcal{N}$, the ideal of~$\mathcal{N}$ generated by the set~$\{ab - ba\mid a, b\in \mathcal{N}\}$ is nilpotent. We secondly provide necessary and sufficient conditions on the graph  $E$ and the field  $K$ for which the Leavitt path algebra $L_K(E)$ is Lie solvable. Consequently, we obtain a complete description of Lie nilpotent Leavitt path algebras, and show that the Lie solvability of~$L_K(E)$ and the Lie nilpotency of $[L_K(E),L_K(E)]$ are the same.
\medskip


\textbf{Mathematics Subject Classifications}: 16S88, 17A01, 17A30, 17B30

\textbf{Key words}: Novikov algebra; Leavitt path algebra; Lie nilpotent and solvable algebra.
\end{abstract}

\section{Introduction}

It is well known that for an arbitrary algebra~$\mA$, one can always define a multiplication by $[\xx, \yy] = \xx\yy -\yy\xx$, where the juxtaposition denotes multiplication in~$\mA$. And~$\mA$  is called a \emph{Lie-admissible} algebra~\cite{Albert} if~$(\mA, [-,-])$  is a Lie algebra. In this case, we call $(\mA, [-,-])$ the {\it associated Lie algebra}   of  $\mA$.  As is well known, Novikov and associative algebras are Lie-admissible algebras.  A natural and interesting problem is to determine the structure of a Lie-admissible algebra when its associated Lie algebra has some properties.

Although, in general,  answering this question seems to be a quite difficult task, there have been obtained a number of interesting results regarding the question among which we mention, for example, the following ones. In \cite{jen1}, with suitable definitions of ``commutator ideal", many of the properties of commutator subgroups had analogues in the theory of associative algebras. In particular,  Jennings~\cite{jen1} was interested in extending the notions of ``nilpotent group" and ``solvable group" to rings, and proved that: (1)  if $\mA$ is an associative algebra, not of characteristic $2$ (the latter condition is shown to be indispensable), then $\mA$ is solvable if  its associated Lie algebra is;  (2) if $\mA$ is an associative algebra whose associated Lie algebra is nilpotent, then
the ideal~$\mA\circ \mA$  of~$\mA$ generated by the set~$\{ab-ba\mid \aa,\bb\in\mA\}$  is nilpotent. Sharma and Srivastava~\cite{lie-solvable} proved that if $\mA$ is an associative algebra over a field $K$ whose associated Lie algebra is
solvable, and if the characteristic of $K$ is neither~$2$ nor~$3$, then~$\mA\circ \mA$ is nil. Riley~\cite{Riley} proved that for an associative algebra~$\mA$ over a field of characteristic $p >0$,  the ideal~$\mA\circ \mA$ is nil of bounded index if the associated Lie algebra of $\mA$ is either nilpotent, or  solvable with $p> 2$.

Novikov algebras arise in many areas of mathematics and physics. We refer to \cite{Burde2006}, and the references therein, for more information on these topics. Bai and Meng
classified the Novikov algebras over $\mathbb{C}$ of dimension up to $3$ in \cite{baimeng} (we should note that Novikov algebras are Lie-admissible, hence can be classified according to their associated Lie algebras), and   classified  the  complete Novikov algebras over $\mathbb{C}$ that have a nilpotent associated Lie algebra in \cite{bai}. Burde and Graaf~\cite{Burde} classified the $4$-dimensional Novikov algebras over~$\mathbb{C}$ that have a nilpotent associated Lie algebra (not just the complete ones).

A Leavitt path algebra is a universal algebra constructed from a directed graph, simultaneously introduced by Abrams and Aranda Pino in \cite{ap:tlpaoag05} as well as by Ara, Moreno, and Pardo  in \cite{amp:nktfga}. The Leavitt path algebras generalize the Leavitt algebras $L_K(1, n)$ of \cite{leav:tmtoar}, and also contain many other interesting classes of algebras.  In addition, Leavitt path algebras are intimately related to graph $C^*$-algebras (see \cite{r:ga}). During  the past fifteen years, Leavitt path algebras have become a topic of intense investigation by mathematicians from across the mathematical spectrum.  We refer the reader to \cite{a:lpatfd} and \cite{AAS} for a detailed history and overview of Leavitt path algebras. Recently there have been obtained a number of interesting results regarding the associated Lie algebra of a Leavitt path algebra. Abrams and Funk-Neubauer~\cite{leav-sim-lie} gave a criterion on the simplicity of the Lie algebra~$[L_K(1, n), L_K(1, n)]:=\Span_{K}\{\aa\bb-\bb\aa \mid \aa,\bb\in L_K(1, n)\}$.  Abrams and Mesyan~\cite{leav-lie} extended the result to all simple Leavitt path algebras  $L_K(E)$ of row-finite graphs $E$ over a field $K$ by giving necessary and sufficient conditions on~$E$ and~$K$ which determine the simplicity of the Lie algebra~$[L_{K}(E), L_{K}(E)]$. In \cite{Nam} the second author generalized Abrams and Mesyan's result to simple Steinberg algebras.

The current paper is a continuation of the investigation of the structure of Novikov and Leavitt path algebras when their associated Lie algebras are nilpotent or solvable. More  precisely,  based on Jennings's idea~\cite{jen1} for associative algebras, we investigate the nilpotency of the associated Lie algebra of a Novikov algebra via central chains of ideals, and obtain an analogue of Jennings's result \cite[Theorems 3.9 and 6.6]{jen1} that the associated Lie algebra of a Novikov algebra~$\mN$ is nilpotent if and only if~$\mN$ is of finite class. Consequently, for  every  Lie nilpotent Novikov algebra $\mN$, the ideal of~$\mN$ generated by~$\{ab-ba\mid \aa,\bb\in\mN\}$ is nilpotent (Theorem \ref{nil-finclass}). We should mention that Jennings's proof was based essentially on the associativity. However, Novikov algebras are in general not associative algebras, and hence some different, novel techniques have to be applied in places. We also find a property of Lie nilpotent Novikov algebras on the products of commutator ideals that does not hold in general for Lie nilpotent associative algebras (Theorem \ref{prod-com-id}). For Leavitt path algebra case, we provide necessary and sufficient conditions on $E$ and $K$ for which the associated Lie algebra of the Leavitt path algebra $L_K(E)$ is solvable (Theorem \ref{sol-LPAs} and and Corollary \ref{sol-LPAs1}). Consequently,  we obtain a complete description of Lie nilpotent Leavitt path algebras (Corollary \ref{nil-LPAs}), and
show that the Lie solvability of~$L_K(E)$ and the Lie nilpotency of $[L_K(E),L_K(E)]$ are the same (Corollary \ref{nil-LPAs1}).

\section{Lie nilpotent Novikov algebras}
Recall (see, e.g., \cite{Gelfand,Balinski-Novikov}) that an algebra~$\mN$ is called a (left) \emph{Novikov algebra} if~$\mN$ satisfies the following identities
$$
\xx  (\yy  \zz)-(\xx  \yy)  \zz
=\yy  (\xx  \zz)- (\yy  \xx)  \zz \ \mbox{(left symmetry)},
$$
$$
(\xx  \yy)  \zz=(\xx  \zz)  \yy \ \mbox{(right commutativity)}
$$
for all~$\xx$, $\yy$ and~$\zz$ $\in \mN$.  A Novikov algebra is not an associative algebra  in general, but  it is a Lie-admissible algebra. However, its left symmetry is, to some extent, a weakened version of the associativity, and the right commutativity might bring some interesting properties that an associative algebra does not have in general.  So it is a natural problem to consider what kind of results for associative algebras also holds for Novikov algebras. There have been obtained a number of
interesting results regarding this question among which we mention, for example, the following ones. Kegel proved in~\cite{ass} that an associative ring represented as a sum of two nilpotent subrings is nilpotent; Pchelintsev generalized  Kegel's theorem for alternative algebras, substituting nilpotency by solvability~\cite{Pch-alter};  Shestakov and the first author~\cite{nov-sol} proved that a (left) Novikov algebra represented as a sum of two right nilpotent (equivalently,   solvable) subalgebras is right nilpotent (equivalently,   solvable).  In this section, we shall see that Novikov algebras share some common properties as associative algebras when it comes to the certain properties of lower central chains.

\subsection{Central chains of ideals in a Novikov algebra}\label{subsec-central-ideal}
The aim of this subsection is to show that some properties of lower central chain of associative algebras also hold for Novikov algebras. These properties will be very useful for the study of Lie nilpotent Novikov algebras in the next subsection.

Let $\mA$ be an arbitrary Lie-admissible algebra over a given field $K$. We define
$$[\aa,\bb]=\aa\bb-\bb\aa$$ for all $a$ and $b\in\mA$.
For all subspaces $A$ and $B$ of~$\mA$, we define
$$[A,B]=\Span\{[\aa,\bb] \mid \aa\in A, \bb\in B\}  \  \mbox{ and }  \  AB=\Span\{\aa\bb\mid \aa\in A, \bb\in B\}.$$
We call a space~$V\subseteq \mA$ a \emph{Lie ideal} of~$\mA$  if we have~$[V,\mA]\subseteq V$.
Finally, for all subspaces  $A$ and $B$ of~$\mA$, we define
$$A\circ B=\Id{[A,B]},$$
that is, the ideal of~$\mA$ generated by~$[A,B]$. Following the idea of Jennings \cite{jen1}, we call~$A\circ B$ the  \emph{commutator ideal} of~$A$ and~$B$.  We clearly have~$A\circ B=B\circ A$.

The following lemma provides us with a description of commutator ideals of Novikov algebras. 

\begin{lem}\label{com-id}
Let~$\mN$ be a Novikov algebra, and let~$A$ and $B$ be two Lie ideals of~$\mN$. Then we have~$A\circ B=[A,B] +\mN[A,B]$.
\end{lem}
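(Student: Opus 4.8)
The plan is to prove the set equality $A\circ B = [A,B] + \mN[A,B]$ by establishing the two inclusions. The right-to-left inclusion should be the routine direction: since $A\circ B$ is by definition the ideal generated by $[A,B]$, it certainly contains $[A,B]$, and being an ideal it absorbs left multiplication by $\mN$, so $\mN[A,B] \subseteq A\circ B$ as well. The substance of the lemma is therefore the reverse inclusion: I must show that the right-hand side $R := [A,B] + \mN[A,B]$ is \emph{already} an ideal of $\mN$. Because $R$ manifestly contains $[A,B]$ and $A\circ B$ is the \emph{smallest} ideal containing $[A,B]$, once $R$ is known to be an ideal we immediately get $A\circ B \subseteq R$, closing the argument.

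So the heart of the proof is verifying that $R$ is a two-sided ideal, i.e. that $\mN R \subseteq R$ and $R\mN \subseteq R$. First I would handle left multiplication: I need $\mN[A,B] \subseteq R$ (immediate) and $\mN(\mN[A,B]) \subseteq R$. For the latter, take a typical element $x(y c)$ with $x,y \in \mN$ and $c \in [A,B]$; using the left-symmetry identity $x(yc) - (xy)c = y(xc) - (yx)c$, I can rewrite $x(yc)$ in terms of products of the form $(\text{anything})c$ together with $y(xc)$. The key observation is that left-symmetry lets me reduce a doubly-nested left product to terms that live in $\mN[A,B]$ plus a correction, and iterating or combining these should keep everything inside $R$. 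I expect the commutator $[\mN, \mN]$ to interact cleanly with $c \in [A,B]$ because $A$ and $B$ are Lie ideals — this is presumably where the Lie-ideal hypothesis is used.

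The more delicate part will be right multiplication, showing $R\mN \subseteq R$, and here the right-commutativity identity $(xy)z = (xz)y$ is the natural tool. I would separately check $[A,B]\mN \subseteq R$ and $(\mN[A,B])\mN \subseteq R$. For a product $c\,z$ with $c = [\aa,\bb] \in [A,B]$ and $z \in \mN$, I expect to expand $c z = (\aa\bb - \bb\aa)z$ and rewrite it so that the commutator structure is preserved modulo terms in $\mN[A,B]$; the relation between $cz$ and $[ \aa z', \bb]$-type expressions, combined with the fact that $A,B$ are Lie ideals (so that brackets of their elements against $\mN$ stay inside $A$ respectively $B$), should pull the result back into $[A,B] + \mN[A,B]$. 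For the nested term $(y c)z$, right-commutativity gives $(yc)z = (yz)c$, which lands directly in $\mN[A,B] \subseteq R$ — this is the cleanest step and illustrates exactly why right-commutativity is so useful here.

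The main obstacle I anticipate is the bookkeeping in showing $[A,B]\mN \subseteq R$, since a bare commutator multiplied \emph{on the right} by an arbitrary algebra element need not obviously be a commutator again; I will likely need to convert $[\aa,\bb]z$ into a combination of a genuine commutator in $[A,B]$ (using that $\aa z$ or $\bb z$ land back in $A$ or $B$ via the Lie-ideal property) plus an explicit $\mN[A,B]$ remainder, with the Novikov identities supplying the exact identities that make this conversion valid. Once both one-sided absorptions are verified, $R$ is an ideal containing $[A,B]$, and the two inclusions combine to give $A\circ B = [A,B] + \mN[A,B]$, as claimed.
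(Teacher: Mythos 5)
Your overall strategy coincides with the paper's: both arguments reduce the lemma to showing that $R:=[A,B]+\mN[A,B]$ is an ideal, both use the Jacobi identity together with the Lie-ideal hypothesis to get $[A,B]\mN\subseteq R$ (via $[a,b]x=x[a,b]+[[a,x],b]+[a,[b,x]]$, where $[a,x]\in A$ and $[b,x]\in B$), and both use right commutativity to get $(\mN[A,B])\mN\subseteq\mN[A,B]$. However, there is a genuine gap in your treatment of the remaining absorption $\mN\bigl(\mN[A,B]\bigr)\subseteq R$. Applying left symmetry directly to $x(yc)$ with $c\in[A,B]$ gives $x(yc)=(xy)c-(yx)c+y(xc)$; the first two terms lie in $\mN[A,B]$, but the leftover $y(xc)$ is a doubly nested product of exactly the same shape with $x$ and $y$ interchanged. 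Iterating left symmetry only undoes this exchange --- the identity tells you nothing beyond $x(yc)\equiv y(xc) \pmod{\mN[A,B]}$ --- so ``iterating or combining these'' as you propose never terminates.

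The missing idea, which is the one real maneuver in the paper's proof, is to first move the commutator from the right factor to the left factor of the inner product before invoking left symmetry. From the Jacobi computation you already have $xc=cx-[[a,x],b]-[a,[b,x]]$, so $y(xc)=y(cx)$ plus elements of $\mN[A,B]$. Now apply left symmetry to $y(cx)$, where the commutator sits in the middle slot: $y(cx)=(yc)x+c(yx)-(cy)x$. Every term is then controlled: $(yc)x=(yx)c\in\mN[A,B]$ by right commutativity, $c(yx)\in[A,B]\mN\subseteq R$ by the step you already established, and $(cy)x\in R\mN\subseteq R$ since $cy\in R$ and you have already shown both $[A,B]\mN\subseteq R$ and $(\mN[A,B])\mN\subseteq R$. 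Without this preliminary swap the left-symmetry step as you describe it does not close up, so you need to make it explicit; with it, your proof becomes essentially identical to the paper's.
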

\begin{proof}
Obviously, it suffices to show that~$[A,B] +\mN[A,B]$ is an ideal of~$\mN$. Since $\mN$ is Lie-admissible,   we have
$$[[\xx,\yy],\zz]+[[\yy,\zz],\xx]+[[\zz,\xx],\yy]=0$$  for all~$\xx,\yy,\zz\in\mN$.	In particular, for all~$\aa\in A$,  $\bb\in B$ and~$\xx\in \mN$, we have
$$[\aa,\bb]\xx-\xx[\aa,\bb]
=[[\aa,\bb],\xx]=[[\aa,\xx],\bb]+[\aa,[\bb,\xx]].$$
Since~$[\aa,\xx]\in A$ and~$[\bb,\xx]\in B$, we obtain  that
$$[\aa,\bb]\xx=\xx[\aa,\bb]+[[\aa,\xx],\bb]+[\aa,[\bb,\xx]]\in [A,B] +\mN[A,B].$$
For all~$\yy\in \mN$, by the right commutativity, we obtain that
$$(\xx[\aa,\bb])\yy=(\xx\yy)[\aa,\bb]\in [A,B] +\mN[A,B], $$
and so, by the left symmetry and by the above reasoning, we deduce
\begin{align*}
\yy(\xx[\aa,\bb])
=&\yy(-[[\aa,\xx],\bb]-[\aa,[\bb,\xx]]+[\aa,\bb]\xx)&\\
=&-\yy[[\aa,\xx],\bb]-\yy[\aa,[\bb,\xx]]+\yy([\aa,\bb]\xx)&\\
=&-\yy[[\aa,\xx],\bb]-\yy[\aa,[\bb,\xx]]
+(\yy[\aa,\bb])\xx+[\aa,\bb](\yy\xx)-([\aa,\bb]\yy)\xx&\\
\in &\, [A,B] +\mN[A,B].&
\end{align*}
This implies that $[A,B] +\mN[A,B]$ is an ideal, and so we have~$A\circ B=[A,B] +\mN[A,B]$, thus finishing the proof.
\end{proof}

Equipped with the notion of commutator ideals, we are now able to introduce the notion of central chains of ideals of a Lie-admissible algebra~$\mA$.

Let
\begin{equation}\label{def-cent}
\mA=A_1\supseteq A_2 \supseteq \cdots \supseteq A_\mm
\supseteq A_{\mm+1}=(0)
\end{equation}
be a chain of ideals of~$\mA$. Such a chain is called a \emph{central chain of ideals} if we have
\begin{equation}\label{cent-cond}
\mA\circ A_\ii\subseteq A_{\ii+1} \ \ \   (\ii=1,2\wdots m) .
\end{equation}
We shall soon see that Novikov algebras which possess central chains  of ideals have special properties; we investigate some of them by considering a particular central chain.
\begin{defn}\label{defi-fc}
For every Lie-admissible algebra~$\mA$ we form a series of ideals
\begin{equation}\label{def-hi}
H_1:=\mA, \  H_{\ii+1}:=H_\ii\circ \mA \mbox{ for } \ii\geq 1.
\end{equation}
We say that~$\mA$ is of \emph{finite class} if~$H_n=(0)$ for some positive integer~$n$. For the minimal integer~$n$ such that~$H_n=(0)$, we call~$n-1$ the \emph{class} of~$\mA$, and call
\begin{equation}\label{lower-cent}
\mA=H_1\supseteq H_2\supseteq \cdots \supseteq H_{n-1}\supseteq H_n=(0)
\end{equation}
the \emph{lower central chain} of~$\mA$.  To avoid too many repetitions, we shall fix the notation of~$H_\ii$ for all~$\ii\geq 1$.
\end{defn}

It is well known (see, e.g., \cite[Theorem 2.1]{jen1}) that  an associative algebra is of finite class if and only if it has a central chain of ideals, the proof of which is quite easy and holds for every Lie-admissible algebra.  In light of this note, we shall focus on the study of properties of the lower central chain of   Novikov algebras that are of finite class.

For associative algebras that are of finite class, Jennings \cite{jen1} showed nice connections between the central  chains~\eqref{def-cent} and~\eqref{lower-cent}. In particular, in \cite[Theorems 3.3 and 3.4]{jen1} Jennings proved that $H_p A_q\subseteq A_{\pp+\qq-1}$ and $H_p\circ A_q\subseteq A_{\pp+\qq}$ for all integers~$\pp,\qq\geq1$, and the proof was based essentially on the associativity. In the following theorem, we provide an analogue of this result for  Novikov algebras with new techniques.
\begin{lem}\label{circ-pro}
Let~$\mN=A_1\supseteq A_2 \supseteq \cdots \supseteq A_\mm
\supseteq \cdots $ be a chain of ideals of a Novikov algebra~$\mN$  satisfying~$\mN \circ A_\ii\subseteq A_{\ii+1}$ for all~$\ii\geq 1$.
Define a series of ideals inductively by the rule
$H_1=\mN$, $H_{\ii+1}=\mN \circ H_\ii \mbox{ for } \ii\geq 1$.
Then  we have~$H_\pp\circ A_q\subseteq A_{\pp+\qq}$. In particular, we have~$H_\pp\circ H_\qq\subseteq H_{\pp+\qq}$.
\end{lem}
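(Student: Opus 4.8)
The plan is to reduce the statement to a commutator estimate and then prove that estimate by induction on $p$, carrying an auxiliary product estimate along to control the non-associative terms. First I would note that, since $A_{p+q}$ is a two-sided ideal and both $H_p$ and $A_q$ are ideals (hence Lie ideals), Lemma~\ref{com-id} gives $H_p\circ A_q=[H_p,A_q]+\mN[H_p,A_q]$, and $\mN[H_p,A_q]\subseteq\mN A_{p+q}\subseteq A_{p+q}$ as soon as $[H_p,A_q]\subseteq A_{p+q}$. Thus it suffices to prove, for all $p,q\ge 1$, that $[H_p,A_q]\subseteq A_{p+q}$. I would prove this by induction on $p$, uniformly in $q$, and simultaneously establish the one-sided product estimate $A_{q}H_p\subseteq A_{p+q-1}$, which is exactly what is needed to place the products generated below. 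The base case $p=1$ is immediate: $[\mN,A_q]\subseteq\mN\circ A_q\subseteq A_{q+1}$, while $A_q\mN\subseteq A_q$.

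For the inductive step I would write $H_{p+1}=\mN\circ H_p=[\mN,H_p]+\mN[\mN,H_p]$ via Lemma~\ref{com-id} and estimate $[H_{p+1},A_q]$ on each summand. On the commutator summand the Jacobi identity suffices: for $x\in\mN$, $h\in H_p$, $a\in A_q$,
$$[[x,h],a]=[[x,a],h]+[x,[h,a]],$$
where $[x,a]\in A_{q+1}$ gives $[[x,a],h]=-[h,[x,a]]\in[H_p,A_{q+1}]\subseteq A_{p+q+1}$ by the inductive hypothesis (applied with $q+1$ in place of $q$), and $[h,a]\in[H_p,A_q]\subseteq A_{p+q}$ gives $[x,[h,a]]\in[\mN,A_{p+q}]\subseteq A_{p+q+1}$. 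Hence $[[\mN,H_p],A_q]\subseteq A_{p+q+1}$.

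The heart of the argument is the product summand $[\mN[\mN,H_p],A_q]$, where associativity is unavailable and both Novikov identities must be used. The key new tool is the identity, valid for \emph{all} $x,h,c\in\mN$ and obtained purely from right commutativity,
$$[x,h]\,c=[x,c]\,h-[h,c]\,x,\qquad(\dagger)$$
which converts a right multiplication of a commutator into commutators that gain a level in the chain. For a generator $y[x,h]$ I would use right commutativity $\big(y[x,h]\big)a=(ya)[x,h]$ together with left symmetry to expand $a\big(y[x,h]\big)$, reducing $[y[x,h],a]$ to a sum of terms of two kinds: iterated commutators that the Jacobi identity and the inductive hypothesis place in $A_{p+q+1}$, and products to which $(\dagger)$ applies. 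Repeatedly invoking $(\dagger)$, Jacobi, the inductive hypothesis, and the auxiliary product estimate $A_{q+1}H_p\subseteq A_{p+q}$ then rewrites every summand so that the surviving contributions lie in $A_{p+q+1}$.

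The main obstacle I expect is precisely the bookkeeping of cancellation in this product summand: after the rewriting, several terms land only in $A_{p+q}$, one level too shallow, and one must verify that these shallow contributions cancel in pairs (it is here that the symmetry built into $(\dagger)$, and hence right commutativity, is indispensable, with no analogue in Jennings's associative argument). Once $[H_p,A_q]\subseteq A_{p+q}$ is established, the inclusion $H_p\circ A_q\subseteq A_{p+q}$ follows from the reduction above, and the auxiliary estimate $A_qH_{p+1}\subseteq A_{p+q}$ propagates by the same computation to sustain the induction. Finally, the ``in particular'' statement follows by applying the proved inclusion to the chain $A_i:=H_i$: the $H_i$ form a descending chain of ideals (by monotonicity of $\circ$) satisfying $\mN\circ H_i=H_{i+1}$, so the hypotheses hold and $H_p\circ A_q\subseteq A_{p+q}$ becomes $H_p\circ H_q\subseteq H_{p+q}$.
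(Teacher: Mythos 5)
Your overall skeleton matches the paper's: reduce via Lemma~\ref{com-id} to showing $[H_\pp,A_\qq]\subseteq A_{\pp+\qq}$, induct on $\pp$, split the generators of the top ideal into the commutator type $[x,h]$ and the product type $y[x,h]$, and dispose of the commutator type with the Jacobi identity exactly as you do. The gap is in the product type, which is the actual content of the lemma. There you reduce $[y[x,h],a]$ to ``a sum of terms of two kinds'' and assert that repeated use of $(\dagger)$, Jacobi, and an auxiliary estimate $A_\qq H_\pp\subseteq A_{\pp+\qq-1}$ places everything in the right level --- while conceding in the next paragraph that several terms land only one level too shallow and that ``one must verify that these shallow contributions cancel in pairs.'' That verification \emph{is} the proof; as written you have a plan, not an argument. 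Compounding this, the auxiliary estimate $A_\qq H_\pp\subseteq A_{\pp+\qq-1}$ is itself never established (its inductive step needs commutator control of the very kind you are in the middle of proving, so the joint induction would have to be set up and closed explicitly), and, as it turns out, it is not needed at all.

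The paper resolves the product case by an exact identity with no residual cancellation to track: writing $\hh'=[\hh_{\pp-1},\xx]$ for the inner commutator (so the generator is $\yy\hh'$), a direct computation using only right commutativity and left symmetry gives
$$[\yy\hh',\aa]=[\hh',\aa]\yy+\yy[\hh',\aa]-[\hh',\yy\aa].$$
Every term on the right is then manifestly in $A_{\pp+\qq}$: $[\hh',\aa]$ and $[\hh',\yy\aa]$ lie in $A_{\pp+\qq}$ by the already-settled commutator case (note $\yy\aa\in A_\qq$ because $A_\qq$ is an ideal), and $A_{\pp+\qq}$ absorbs left and right multiplication by $\yy$ because it too is an ideal. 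So the only inputs are Lemma~\ref{com-id}, Jacobi, and the two Novikov identities; your identity $(\dagger)$ is Lemma~\ref{lem-id} of the paper and enters only later, in the proof of Theorem~\ref{th-pro}. Either carry out your cancellation bookkeeping in full, or aim directly for the displayed identity. Your treatment of the base case, the commutator summand, and the derivation of the ``in particular'' statement from the chain $A_\ii:=H_\ii$ are all fine.
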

\begin{proof}
We use induction on~$\pp$ to prove the claim. For~$\pp=1$,  the claim immediately follows from the definition of central chain of ideals. Now we proceed inductively. Assume that~$\pp>1$.  Obviously, for all~$\pp,\qq\geq1$, all~$H_\pp$ and~$A_\qq$ are Lie ideals of~$\mN$. So,  by Lemma~\ref{com-id}, we have
$$H_\pp \circ A_\qq=[H_\pp,A_\qq] +\mN[H_\pp,A_\qq].$$
Since~$A_{\pp+\qq}$ is an ideal of~$\mN$, it suffices to show~$[\hh, \aa]\in A_{\pp+\qq}$  for all~$\hh\in H_\pp$, $\aa\in A_\qq$.
Consider the following cases.
Case 1: If~$\hh=[\hh_{\pp-1},\xx]$ for some elements~$\hh_{\pp-1}\in H_{\pp-1}$ and~$\xx\in \mN$, then by the induction hypothesis, we have
\begin{multline*}
[\hh,\aa]
= [[\hh_{\pp-1},\xx],\aa]
= [[\hh_{\pp-1},\aa],\xx]+ [\hh_{\pp-1},[\xx,\aa]] \\
\in  [A_{\pp+\qq-1}, \mN]+ [H_{\pp-1}, A_{\qq+1}]
\subseteq  A_{\pp+\qq-1}\circ \mN+ H_{\pp-1}\circ A_{\qq+1}
\subseteq  A_{\pp+\qq}.
\end{multline*}
Case 2: If~$\hh=\yy[\hh_{\pp-1},\xx]$ for some elements~$\hh_{\pp-1}\in H_{\pp-1}$ and~$\xx,\yy\in \mN$, then we denote~$[\hh_{\pp-1},\xx]$ by~$\hh'$. Obviously, $\hh'$ is in~$H_\pp$.   By Case 1, for all~$\zz\in A_\qq$, we have~$[\hh',\zz]\in A_{\pp+\qq}$. In particular, we have
$$ [\hh',\aa]\yy+\yy[\hh',\aa] -[\hh',\yy\aa]\in A_{\pp+\qq}.$$
By the induction hypothesis, we obtain that
\begin{align*}
[\hh,\aa]
=&(\yy\hh')\aa-\aa(\yy\hh')&\\
=&(\yy\aa)\hh'-(\aa\yy)\hh'
-\yy(\aa\hh')+(\yy\aa)\hh'&\\
=&(\yy\aa)\hh'-\hh'(\aa\yy)+[\hh',\aa\yy]
-\yy(\hh'\aa)+\yy[\hh',\aa]
+\hh'(\yy\aa)-[\hh',\yy\aa]&\\
=&(\yy\aa)\hh'-\hh'(\aa\yy)
-\yy(\hh'\aa)+(\hh'\yy)\aa+\yy(\hh'\aa)
-(\yy\hh')\aa&\\
&+[\hh',\aa\yy]+\yy[\hh',\aa]-[\hh',\yy\aa]&\\
=&(\hh'\yy)\aa-\hh'(\aa\yy)
+[\hh',\aa\yy]+\yy[\hh',\aa]-[\hh',\yy\aa]\  (\mbox{by right commutativity})&\\
=&(\hh'\aa)\yy-\hh'(\aa\yy)
+[\hh',\aa\yy]+\yy[\hh',\aa]-[\hh',\yy\aa]&\\
=&(\aa\hh')\yy+[\hh',\aa]\yy-\hh'(\aa\yy)
+[\hh',\aa\yy]+\yy[\hh',\aa]-[\hh',\yy\aa]&\\
=&(\aa\yy)\hh'-\hh'(\aa\yy)
+[\hh',\aa]\yy+[\hh',\aa\yy]+\yy[\hh',\aa]
-[\hh',\yy\aa]&\\
=& [\hh',\aa]\yy+\yy[\hh',\aa] -[\hh',\yy\aa]\in A_{\pp+\qq}.&
\end{align*}
\WEN
Therefore, we have~$H_\pp\circ A_\qq\subseteq A_{\pp+\qq}$, thus finishing the proof.
\end{proof}

Before going further, we shall prove two interesting identities that will be useful in the sequel.
\begin{lem}\label{lem-id}
Let~$\mN$ be an arbitrary Novikov algebra and let~$\xx,\yy,\zz$ be elements of~$\mN$.  Then we have~$[\xx,\yy]\zz+[\yy,\zz]\xx+[\zz,\xx]\yy=0$ and~$\xx[\yy,\zz]+\yy[\zz,\xx]+\zz[\xx,\yy]=0$.
\end{lem}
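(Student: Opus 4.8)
The plan is to prove the two identities in order, deriving the first from right commutativity alone and then bootstrapping the second from the first via left symmetry. For the first identity, I would simply expand
$$[\xx,\yy]\zz+[\yy,\zz]\xx+[\zz,\xx]\yy
=(\xx\yy)\zz-(\yy\xx)\zz+(\yy\zz)\xx-(\zz\yy)\xx+(\zz\xx)\yy-(\xx\zz)\yy,$$
and then apply right commutativity $(\aa\bb)\cc=(\aa\cc)\bb$ to rewrite each left-normed product so that its last two factors are in a fixed order. Since right commutativity makes $(\aa\bb)\cc$ symmetric in $\bb$ and $\cc$, the six terms pair off into three cancelling pairs (for instance $(\yy\zz)\xx=(\yy\xx)\zz$ cancels $-(\yy\xx)\zz$, and similarly for the others), yielding $0$. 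This step is purely formal once one observes that right commutativity is exactly the statement needed.

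For the second identity, the key observation I would use is that left symmetry can be rearranged into the convenient form
$$\xx(\yy\zz)-\yy(\xx\zz)=(\xx\yy)\zz-(\yy\xx)\zz=[\xx,\yy]\zz.$$
The plan is then to expand the target
$$\xx[\yy,\zz]+\yy[\zz,\xx]+\zz[\xx,\yy]
=\xx(\yy\zz)-\xx(\zz\yy)+\yy(\zz\xx)-\yy(\xx\zz)+\zz(\xx\yy)-\zz(\yy\xx)$$
and regroup the six right-nested products into three pairs, each of which matches the left-symmetry form above: $\xx(\yy\zz)-\yy(\xx\zz)=[\xx,\yy]\zz$, $\yy(\zz\xx)-\zz(\yy\xx)=[\yy,\zz]\xx$, and $\zz(\xx\yy)-\xx(\zz\yy)=[\zz,\xx]\yy$. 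Summing these converts the expression into $[\xx,\yy]\zz+[\yy,\zz]\xx+[\zz,\xx]\yy$, which is precisely the left-hand side of the first identity and hence vanishes.

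The only genuinely nonobvious point, and the step I expect to require the most care, is the regrouping in the second identity: one must pick the correct pairing of the six terms so that each pair exactly fits the rearranged left-symmetry relation, after which the second identity reduces cleanly to the first. Everything else is a direct bookkeeping of the two defining axioms, so no further machinery beyond right commutativity and left symmetry is needed.
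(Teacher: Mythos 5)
Your proof is correct and follows essentially the same route as the paper: the first identity is obtained by expanding and cancelling in pairs via right commutativity, and the second is reduced to the first by showing that $\xx[\yy,\zz]+\yy[\zz,\xx]+\zz[\xx,\yy]$ equals $[\xx,\yy]\zz+[\yy,\zz]\xx+[\zz,\xx]\yy$. The only (harmless) difference is that you establish this last equality directly from the left-symmetry axiom, whereas the paper cites the Jacobi identity for the commutator (i.e.\ the Lie-admissibility of~$\mN$), which amounts to the same regrouping.
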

\begin{proof}
  By right commutativity, we have
  \begin{align*}
    [\xx,\yy]\zz+[\yy,\zz]\xx+[\zz,\xx]\yy
    =&(\xx\yy)\zz-(\yy\xx)\zz
    +(\yy\zz)\xx-(\zz\yy)\xx
    +(\zz\xx)\yy-(\xx\zz)\yy&\\
     =&(\xx\yy)\zz-(\xx\zz)\yy
     +(\yy\zz)\xx-(\yy\xx)\zz
    +(\zz\xx)\yy -(\zz\yy)\xx=0.&
  \end{align*}
 Moreover, since every Novikov algebra is Lie-admissible, we have
 $$[\xx,\yy]\zz-\zz[\xx,\yy]+[\yy,\zz]\xx-\xx[\yy,\zz]
 +[\zz,\xx]\yy-\yy[\zz,\xx]=[[\xx,\yy],\zz]+[[\yy,\zz],\xx]+[[\zz,\xx],\yy]=0.$$
 Therefore, we obtain that
 $$\xx[\yy,\zz]+\yy[\zz,\xx]+\zz[\xx,\yy]=[\xx,\yy]\zz+[\yy,\zz]\xx
 +[\zz,\xx]\yy=0,$$
 thus finishing the proof.
\end{proof}

Recall that for an arbitrary subspace~$V$ of a Novikov algebra $\mN$, we define~$V^1=V$ and~$V^{\nn}=\sum_{1\leq\ii\leq\nn-1}V^{\ii} V^{\nn-\ii}$ for all $\nn\geq2$. Then~$V$ is called \emph{nilpotent} if~$V^{\nn}=(0)$ for some positive integer~$\nn$. We are now in position to provide the main result of this subsection, which shows that if~$\mN$ is a Novikov algebra that is of finite class, then the commutator ideal~$\mN\circ \mN$ is nilpotent.

\begin{thm}\label{th-pro}
 With the notation of Lemma~\ref{circ-pro}, we have~$H_\pp H_\qq\subseteq H_{\pp+\qq-1}$ for all positive integers~$\pp$ and~$\qq$. Moreover, if~$\mN$ is of finite class, then $H_2$ is nilpotent of nilpotent index less or equal to the class of~$\mN$.
\end{thm}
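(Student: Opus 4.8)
The plan is to treat the product inclusion $H_\pp H_\qq\subseteq H_{\pp+\qq-1}$ as the main content of the theorem and then deduce the nilpotency of $H_2$ as a short corollary. For the product inclusion I would argue by induction on the left index $\pp$, proving the statement $P(\pp)$: ``$H_\pp H_\qq\subseteq H_{\pp+\qq-1}$ for every $\qq\geq 1$''. The base case $P(1)$ is immediate, since $\mathcal{N}H_\qq\subseteq H_\qq$ because each $H_\qq$ is an ideal of $\mathcal{N}$. The reason I would induct on $\pp$ rather than on $\pp+\qq$ is that the natural rewriting of a product lands in terms of the form $H_{\pp-1}H_{\qq'}$ with $\qq'$ possibly larger than $\qq$; such terms keep the same total index $\pp+\qq$, so an induction on $\pp+\qq$ would be circular, whereas they are covered by $P(\pp-1)$.

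For the inductive step ($\pp\geq 2$) I would invoke Lemma~\ref{com-id} to write $H_\pp=\mathcal{N}\circ H_{\pp-1}=[\mathcal{N},H_{\pp-1}]+\mathcal{N}[\mathcal{N},H_{\pp-1}]$, so that it suffices to bound $ab$ for $b\in H_\qq$ and $a$ of the two generating shapes $[\xx,g]$ and $\yy[\xx,g]$ with $g\in H_{\pp-1}$ and $\xx,\yy\in\mathcal{N}$. In the first case, right commutativity gives $[\xx,g]b=(\xx b)g-(gb)\xx$; here $gb\in H_{\pp-1}H_\qq\subseteq H_{\pp+\qq-2}$ by $P(\pp-1)$, while the term $(\xx b)g\in H_\qq H_{\pp-1}$ is reabsorbed into $H_{\pp+\qq-2}$ using $P(\pp-1)$ together with the commutator bound $[H_\qq,H_{\pp-1}]\subseteq H_\qq\circ H_{\pp-1}\subseteq H_{\pp+\qq-1}$ from Lemma~\ref{circ-pro} and the fact that the chain is decreasing. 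In the second case, right commutativity gives $(\yy[\xx,g])b=(\yy b)[\xx,g]$ with $c:=\yy b\in H_\qq$, and then the identity $c[\xx,g]+\xx[g,c]+g[c,\xx]=0$ of Lemma~\ref{lem-id} rewrites it as $c[\xx,g]=-\xx[g,c]-g[c,\xx]$; the first summand lies in $\mathcal{N}[H_{\pp-1},H_\qq]\subseteq H_{\pp+\qq-1}$ by Lemma~\ref{circ-pro}, while the second lies in $H_{\pp-1}[H_\qq,\mathcal{N}]\subseteq H_{\pp-1}H_{\qq+1}\subseteq H_{\pp+\qq-1}$ by $P(\pp-1)$. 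The crucial point of the bookkeeping is that Lemma~\ref{lem-id} lets me move the ``extra'' bracket onto the right factor so that the left factor always drops from $H_\pp$ to $H_{\pp-1}$; expanding symmetrically instead would leave a residual $H_\pp H_\qq$ term and close no induction.

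Granting the product inclusion, the second part is routine. I would show by induction on $k$ that $H_2^{\,k}\subseteq H_{k+1}$: the case $k=1$ is the definition, and if $H_2^{\,\ii}\subseteq H_{\ii+1}$ for all $\ii<k$, then $H_2^{\,k}=\sum_{\ii=1}^{k-1}H_2^{\,\ii}H_2^{\,k-\ii}\subseteq\sum_{\ii}H_{\ii+1}H_{k-\ii+1}\subseteq H_{k+1}$ by the product inclusion just proved. Consequently, if $\mathcal{N}$ is of finite class with $H_n=(0)$, then $H_2^{\,n-1}\subseteq H_n=(0)$, so $H_2$ is nilpotent of index at most $n-1$, which is exactly the class of $\mathcal{N}$.

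I expect the main obstacle to be the index bookkeeping in the inductive step, specifically arranging every rewriting so that no uncontrolled term of the form $H_\pp H_\qq$ (or $H_\qq H_\pp$) survives. This is what forces the combined use of right commutativity and the two Novikov identities of Lemma~\ref{lem-id}, and it is precisely the place where the failure of associativity and commutativity must be absorbed.
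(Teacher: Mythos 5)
Your overall architecture (induction on the left index $\pp$, splitting $H_\pp$ into generators of the two shapes $[\xx,g]$ and $\yy[\xx,g]$ via Lemma~\ref{com-id}, and the final nilpotency argument) is sound, and your Case 2 is correct. But Case 1 has a genuine gap: expanding $[\xx,g]\bb=(\xx \bb)g-(g\bb)\xx$ by right commutativity and then invoking $P(\pp-1)$ only places both summands in $H_{\pp+\qq-2}$, as you yourself record. Since the chain is decreasing, $H_{\pp+\qq-2}\supseteq H_{\pp+\qq-1}$, so landing in $H_{\pp+\qq-2}$ is strictly \emph{weaker} than the required conclusion $[\xx,g]\bb\in H_{\pp+\qq-1}$, and the induction does not close. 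The term $(g\bb)\xx$ is the real obstruction: $g\bb\in H_{\pp-1}H_\qq$ can only be bounded by $H_{\pp+\qq-2}$ from $P(\pp-1)$, and multiplying on the right by $\xx\in\mN$ cannot raise the index.

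The repair is exactly the move you already make in Case 2, but using the \emph{first} identity of Lemma~\ref{lem-id} instead of the second: from $[\xx,g]\bb+[g,\bb]\xx+[\bb,\xx]g=0$ one gets $[\xx,g]\bb=-[g,\bb]\xx-[\bb,\xx]g$, where $[g,\bb]\xx\in (H_{\pp-1}\circ H_\qq)\mN\subseteq H_{\pp+\qq-1}$ by Lemma~\ref{circ-pro} (since $H_{\pp+\qq-1}$ is an ideal), and $[\bb,\xx]g\in H_{\qq+1}H_{\pp-1}\subseteq H_{\pp+\qq-1}$ by $P(\pp-1)$. This is what the paper does, after first reducing to $\pp\le\qq$ via the observation $[H_\pp,H_\qq]\subseteq H_{\pp+\qq}$ and inducting on $\min\{\pp,\qq\}$ rather than on $\pp$; your choice of induction variable also works here since $\min\{\qq+1,\pp-1\}=\pp-1$. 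The second half of your proof, $(H_2)^m\subseteq H_{m+1}$, is correct and identical to the paper's.
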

\begin{proof}
We first use induction on~$\min\{\pp,\qq\}$ to prove~$H_\pp H_\qq\subseteq H_{\pp+\qq-1}$. If~$\min\{\pp,\qq\}=1$, then it is clear because~$H_\pp$ and~$H_\qq$ are ideals of~$\mN$.
Now we proceed inductively. Assume that~$\pp,\qq\geq 2$. By Lemma~\ref{circ-pro}, for all $\hh_\pp\in H_\pp$ and $\hh_\qq\in H_\qq$, we have
$$\hh_\pp\hh_\qq-\hh_\qq\hh_\pp=[\hh_\pp, \hh_\qq]
\subseteq H_\pp\circ H_\qq\subseteq H_{\pp+\qq}\subseteq H_{\pp+\qq-1}.$$	In other words, $\hh_\pp\hh_\qq$ lies in~$H_{\pp+\qq-1}$ if and only if so does~$\hh_\qq\hh_\pp$. So, without loss of generality, we may assume~$\pp\leq \qq$.
If~$\hh_\pp=\xx_2[\hh_{\pp-1},\xx_1]$ for some~$\xx_1,\xx_2\in\mN$ and~$\hh_{\pp-1}\in H_{\pp-1}$, then we have
$$\hh_\pp\hh_\qq
=(\xx_2[\hh_{\pp-1},\xx_1])\hh_\qq
=(\xx_2\hh_\qq)[\hh_{\pp-1},\xx_1]
=\hh_\qq'[\hh_{\pp-1},\xx_1]$$
for~$\hh_\qq'=\xx_2\hh_\qq\in H_\qq$. So~$\hh_\pp\hh_\qq$ lies in~$H_{\pp+\qq-1}$ if and only if so does~$\hh_\qq'[\hh_{\pp-1},\xx_1]$. By the above reasoning, it suffices to show~$[\hh_{\pp-1},\xx_1]\hh_\qq'\in H_{\pp+\qq-1}$.
So we may assume~$\hh_\pp=[\hh_{\pp-1},\xx]$ for some~$\xx\in\mN$ and~$\hh_{\pp-1}\in H_{\pp-1}$. By  Lemmas~\ref{circ-pro} and~\ref{lem-id} and  the induction hypothesis, we deduce
$$
\hh_\pp\hh_\qq=[\hh_{\pp-1},\xx]\hh_\qq
= -[\hh_\qq,\hh_{\pp-1}]\xx-[\xx,\hh_\qq]\hh_{\pp-1}
\in  H_{\pp-1+\qq}\xx+H_{\qq+1} H_{\pp-1}
\subseteq  H_{\pp+\qq-1}.
$$
For the second claim, we shall use induction on~$\mm$ to show~$(H_2)^m\subseteq H_{m+1}$. For~$\mm=1$, there is nothing to prove. Assume that~$\mm\geq 2$. By the induction hypothesis, we have
\begin{align*}
   (H_2)^\mm&=\sum_{1\leq\ii\leq\mm-1}(H_2)^{\ii} (H_2)^{\mm-\ii}
\subseteq\sum_{1\leq\ii\leq\mm-1}H_{\ii+1} H_{\mm-\ii+1}&\\
&\subseteq\sum_{1\leq\ii\leq\mm-1}H_{\ii+1+\mm-\ii+1-1}
=H_{\mm+1}.&
\end{align*}
Since~$\mN$ is of finite class,  we may assume $H_{\mm+1}=(0)$ for some integer $\mm$, and so $H_2$ is nilpotent of index not greater than~$\mm$, thus finishing the proof.
\end{proof}

\subsection{Lie nilpotent Novikov algebras}\label{subsec-lie-nilpotent-nov} In this subsection, based on Theorem \ref{th-pro}, we show that a Novikov algebra $\mN$ is Lie nilpotent if and only if~$\mN$ is of finite class (Theorem \ref{nil-finclass}), which is an analogue of Jennings's result \cite[Theorem 6.6]{jen1} for associative algebras. We also find a property of  Novikov algebras on the products of commutator ideals that does not hold in general for associative algebras (Theorem \ref{prod-com-id}).

We begin this subsection by recalling useful notions. Let~$\mA$ be a Lie-admissible algebra. We define
$$\mA_{[1]}=\mA \ \mbox{ and }\  \mA_{[\ii+1]}=[\mA,\mA_{[\ii]}] \text{ for all } \ii\geq 1.$$ We call~$\Id{\mA_{[\ii]}}$ the \emph{$i$th commutator ideal} of~$\mA$. And the algebra~$\mA$ is called \emph{Lie nilpotent} if~$\mA_{[\ii]}=(0)$ for some integer~$\ii$.

For every integer~$\ii\geq1$, the linear space~$\mA_{[\ii]}$ is obviously a Lie ideal of~$\mA$. Therefore, by Lemma~\ref{com-id}, we immediately obtain the construction of~$\Id{\mA_{[\ii]}}$ when~$\mA$ is a Novikov algebra.
\begin{lem}\label{lem-mni}
For every Novikov algebra~$\mN$, we have~$\Id{\mN_{[\ii]}}=\mN_{[\ii]}+\mN\mN_{[\ii]}$ for all integer~$\ii\geq 1$.
\end{lem}
Lots of interests have been attracted by the subject of commutator ideals of associative algebras.  Etingof, Kim and Ma~\cite{universal-lie-nilpotent} studied the quotient of a free algebra by its $i$-th commutator ideal, and studied the products of such commutator ideals. Kerchev~\cite{filtration} studied the filtration of a free algebra by its associative lower
central series.  We refer to~\cite{BJ,pro-com} and the references therein for a detailed history and overview of this direction.  It is worth mentioning that there is also a series of interesting results on the solvability or nilpotency of Poisson algebras with respect to their Lie structures~\cite{poi1,poi2,poi3}. In light of these notes, it is natural and interesting to study the commutator ideals of Novikov algebras. Let us begin with the following easy observation.

\begin{lem}\label{lem-mni1}
For every Novikov algebra~$\mN$, we have $\mN_{[2]}\mN_{[\ii]}\subseteq \Id{\mN_{[\ii+1]}}$ for all integer~$\ii\geq 1$.
\end{lem}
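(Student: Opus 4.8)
The plan is to reduce the claimed inclusion to a statement about generators and then to read off the result from the first of the two identities established in Lemma~\ref{lem-id}. Since $\mN_{[2]}=[\mN,\mN]$ is spanned by the commutators $[\xx,\yy]$ with $\xx,\yy\in\mN$, bilinearity of the product shows that $\mN_{[2]}\mN_{[\ii]}$ is spanned by the elements $[\xx,\yy]\aa$ with $\xx,\yy\in\mN$ and $\aa\in\mN_{[\ii]}$. As $\Id{\mN_{[\ii+1]}}$ is a subspace, it therefore suffices to prove that $[\xx,\yy]\aa\in\Id{\mN_{[\ii+1]}}$ for arbitrary such $\xx,\yy,\aa$.

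The key step is to specialize the first identity of Lemma~\ref{lem-id}, namely $[\xx,\yy]\zz+[\yy,\zz]\xx+[\zz,\xx]\yy=0$, by taking $\zz=\aa$. This rearranges to
$$[\xx,\yy]\aa=-[\yy,\aa]\xx-[\aa,\xx]\yy.$$
Now, because $\aa\in\mN_{[\ii]}$ and $\xx,\yy\in\mN$, we have $[\yy,\aa]\in[\mN,\mN_{[\ii]}]=\mN_{[\ii+1]}$ and likewise $[\aa,\xx]=-[\xx,\aa]\in\mN_{[\ii+1]}$. Hence both summands on the right-hand side lie in $\mN_{[\ii+1]}\mN$, and so $[\xx,\yy]\aa\in\mN_{[\ii+1]}\mN$.

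To finish, I would invoke the fact that $\Id{\mN_{[\ii+1]}}$ is, by definition, a two-sided ideal of~$\mN$ containing $\mN_{[\ii+1]}$; in particular it is closed under right multiplication by elements of~$\mN$, so $\mN_{[\ii+1]}\mN\subseteq\Id{\mN_{[\ii+1]}}$. Combining this with the previous paragraph gives $[\xx,\yy]\aa\in\Id{\mN_{[\ii+1]}}$, which is the reduced claim and hence the lemma. There is essentially no serious obstacle here — this is the ``easy observation'' flagged in the text — and the only point requiring any thought is recognizing that the first identity of Lemma~\ref{lem-id} is precisely the device that trades a factor from $\mN_{[2]}$ on the left for commutators in $\mN_{[\ii+1]}=[\mN,\mN_{[\ii]}]$, with the remaining factor being an ordinary element of~$\mN$; everything else is immediate from the ideal property. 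One should only be mildly careful that the explicit description $\Id{\mN_{[\ii+1]}}=\mN_{[\ii+1]}+\mN\mN_{[\ii+1]}$ from Lemma~\ref{lem-mni} involves left multiplication, whereas the computation above produces right multiplication, so the cleanest route is to use the ideal property directly rather than that explicit form.
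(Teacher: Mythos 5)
Your proposal is correct and is essentially identical to the paper's proof: both specialize the first identity of Lemma~\ref{lem-id} to write $[\xx,\yy]\aa=-[\yy,\aa]\xx-[\aa,\xx]\yy$ and then conclude via the fact that $\Id{\mN_{[\ii+1]}}$ is closed under right multiplication by $\mN$. Your closing remark about why one should invoke the ideal property directly rather than the left-multiplication description from Lemma~\ref{lem-mni} is a sensible clarification, but the argument is the same.
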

\begin{proof}
For all~$\aa,\bb\in\mN$, for all~$\cc_\ii\in\mN_{[\ii]}$, by Lemma~\ref{lem-id}, we have
$$
[\aa,\bb]\cc_\ii=-[\bb,\cc_\ii]\aa-[\cc_\ii,\aa]\bb
 \in \Id{\mN_{[\ii+1]}},$$
 thus finishing the proof.
\end{proof}

Recall that for an arbitrary Novikov algebra, we define~$H_1=\mN$ and define~$H_{\ii+1}=\mN\circ H_\ii$ for all~$\ii\geq 1$. By induction on~$\ii$, it is straightforward to show~$\Id{\mN_{[\ii]}}\subseteq H_\ii$.  In the following theorem we show that~$\Id{\mN_{[\ii]}}\supseteq H_\ii$.  In particular, Theorem~\ref{th-pro} then describes the products of the commutator ideals of~$\mN$, which is in general not true for Lie nilpotent associative algebras.

\begin{thm}\label{prod-com-id}
Let~$\mN$ be a Novikov algebra, and define a series of ideals by the rule
$H_1=\mN$, $H_{\ii+1}=\mN \circ H_\ii \mbox{ for all } \ii\geq 1$. Then we have~$\Id{\mN_{[\ii]}}=H_\ii$ for all integer~$\ii\geq 1$. In particular, we have~$\Id{\mN_{[\pp]}}\Id{\mN_{[\qq]}}\subseteq \Id{\mN_{[\pp+\qq-1]}}$ for all integers~$\pp,\qq\geq 1$.
\end{thm}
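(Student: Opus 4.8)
The plan is to prove the equality $\Id{\mN_{[\ii]}}=H_\ii$ by induction on $\ii$, after which the ``in particular'' clause comes for free: once the equality is known, Theorem~\ref{th-pro} gives $\Id{\mN_{[\pp]}}\Id{\mN_{[\qq]}}=H_\pp H_\qq\subseteq H_{\pp+\qq-1}=\Id{\mN_{[\pp+\qq-1]}}$. The inclusion $\Id{\mN_{[\ii]}}\subseteq H_\ii$ is the easy one and is essentially noted already in the text: inductively $\mN_{[\ii+1]}=[\mN,\mN_{[\ii]}]\subseteq[\mN,H_\ii]\subseteq\mN\circ H_\ii=H_{\ii+1}$, and since $H_{\ii+1}$ is an ideal this upgrades to $\Id{\mN_{[\ii+1]}}\subseteq H_{\ii+1}$. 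So the whole content lies in the reverse inclusion $H_\ii\subseteq\Id{\mN_{[\ii]}}$.

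For the reverse inclusion I would again induct on $\ii$, the case $\ii=1$ being the trivial equality $H_1=\mN=\Id{\mN_{[1]}}$. Assuming $H_\ii=\Id{\mN_{[\ii]}}$, I want $H_{\ii+1}\subseteq\Id{\mN_{[\ii+1]}}$. By Lemma~\ref{com-id} applied to the Lie ideals $\mN$ and $H_\ii$ (the latter being an ideal, hence a Lie ideal), $H_{\ii+1}=\mN\circ H_\ii=[\mN,H_\ii]+\mN[\mN,H_\ii]$; since $\Id{\mN_{[\ii+1]}}$ is an ideal, it suffices to show $[\mN,H_\ii]\subseteq\Id{\mN_{[\ii+1]}}$. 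Using the inductive identity $H_\ii=\Id{\mN_{[\ii]}}=\mN_{[\ii]}+\mN\mN_{[\ii]}$ from Lemma~\ref{lem-mni}, this splits into the immediate piece $[\mN,\mN_{[\ii]}]=\mN_{[\ii+1]}\subseteq\Id{\mN_{[\ii+1]}}$ and the genuinely nontrivial piece
$$[\mN,\mN\mN_{[\ii]}]\subseteq\Id{\mN_{[\ii+1]}},$$
which is the crux of the whole theorem.

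To prove the crux I would work modulo the ideal $T:=\Id{\mN_{[\ii+1]}}$ and show $[\xx,\yy\cc]\in T$ for all $\xx,\yy\in\mN$ and $\cc\in\mN_{[\ii]}$. Modulo $T$ there are two cheap congruences available: $\uu\cc'\equiv\cc'\uu$ for $\uu\in\mN$, $\cc'\in\mN_{[\ii]}$ (because $[\uu,\cc']\in\mN_{[\ii+1]}\subseteq T$), and $(\xx\yy)\cc'\equiv(\yy\xx)\cc'$ (this is exactly $[\xx,\yy]\cc'\in T$, i.e.\ Lemma~\ref{lem-mni1}). The computation then runs as follows: $(\yy\cc)\xx=(\yy\xx)\cc$ by right commutativity, while for $\xx(\yy\cc)$ I first replace $\yy\cc$ by $\cc\yy$ modulo $T$ and expand $\xx(\cc\yy)$ by left symmetry into $(\xx\cc)\yy+\cc(\xx\yy)-(\cc\xx)\yy$; applying right commutativity to rewrite $(\xx\cc)\yy=(\xx\yy)\cc$ and $(\cc\xx)\yy=(\cc\yy)\xx$, and then invoking the two congruences above, everything collapses to $\xx(\yy\cc)\equiv(\xx\yy)\cc\equiv(\yy\cc)\xx$, whence $[\xx,\yy\cc]\equiv0$.

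The main obstacle is precisely this crux computation, and the point worth stressing is \emph{why} it succeeds here while its associative analogue famously fails: the collapse is driven entirely by right commutativity (the moves $(\xx\cc)\yy=(\xx\yy)\cc$ and $(\cc\xx)\yy=(\cc\yy)\xx$), an identity associative algebras do not possess. Indeed, attempting the same reduction with left symmetry alone leads only to tautologies such as $\xx(\yy\cc)\equiv\yy(\xx\cc)$, which never pin down $[\xx,\yy\cc]$; it is the interplay of right commutativity with the Lie-centrality of $\mN_{[\ii]}$ modulo $\Id{\mN_{[\ii+1]}}$ that forces the commutator into $T$. Once the crux is established, both inductions close, and the product estimate $\Id{\mN_{[\pp]}}\Id{\mN_{[\qq]}}\subseteq\Id{\mN_{[\pp+\qq-1]}}$ follows at once from Theorem~\ref{th-pro}.
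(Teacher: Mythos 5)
Your proposal is correct and follows essentially the same route as the paper: the same reduction via Lemma~\ref{com-id}/Lemma~\ref{lem-mni} to the crux inclusion $[\mN,\mN\mN_{[\ii]}]\subseteq\Id{\mN_{[\ii+1]}}$, settled by the same interplay of right commutativity, left symmetry, and Lemma~\ref{lem-mni1}. The only (harmless) difference is presentational: you compute with a generic $\cc\in\mN_{[\ii]}$ modulo the ideal $\Id{\mN_{[\ii+1]}}$, whereas the paper writes the element as an explicit commutator $[\cc,\dd_{\ii-2}]$ and carries out one long chain of equalities.
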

\begin{proof}
It suffices to prove that~$H_\ii\subseteq\Id{\mN_{[\ii]}}$ for all integer~$\ii\geq 1$, and we use induction on~$\ii$ to prove this claim. For~$\ii\leq 2$, it is clear. Assume that~$\ii\geq 3$. By the induction hypothesis and  Lemma~\ref{lem-mni}, we have that~$H_\ii$ is generated by
$$[\mN, H_{\ii-1}]=[\mN, \mN_{[\ii-1]}]+[\mN, \mN\mN_{[\ii-1]}]
=\mN_{[\ii]}+[\mN, \mN\mN_{[\ii-1]}].$$
Therefore, it is enough to show~$[\mN, \mN\mN_{[\ii-1]}]\subseteq \Id{\mN_{[\ii]}}$. Indeed, for all~$\aa,\bb,\cc\in \mN$ and for all~$\dd_{\ii-2}\in\mN_{[\ii-2]}$, by Lemma \ref{lem-mni1}, we  deduce
\begin{align*}
[\aa, \bb[\cc,\dd_{\ii-2}]]
=&\aa(\bb[\cc,\dd_{\ii-2}])-(\bb[\cc,\dd_{\ii-2}])\aa&\\
=&\aa([\bb,[\cc,\dd_{\ii-2}]]
+[\cc,\dd_{\ii-2}]\bb)-(\bb\aa)[\cc,\dd_{\ii-2}]&\\
=&\aa[\bb,[\cc,\dd_{\ii-2}]]
+\aa([\cc,\dd_{\ii-2}]\bb)-(\bb\aa)[\cc,\dd_{\ii-2}]&\\
=&\aa[\bb,[\cc,\dd_{\ii-2}]]
+(\aa[\cc,\dd_{\ii-2}])\bb+[\cc,\dd_{\ii-2}](\aa\bb)
-([\cc,\dd_{\ii-2}]\aa)\bb-(\bb\aa)[\cc,\dd_{\ii-2}]&\\
=&\aa[\bb,[\cc,\dd_{\ii-2}]]
+(\aa\bb)[\cc,\dd_{\ii-2}]-(\bb\aa)[\cc,\dd_{\ii-2}]
+[\cc,\dd_{\ii-2}](\aa\bb)-([\cc,\dd_{\ii-2}]\aa)b&\\
=&\aa[\bb,[\cc,\dd_{\ii-2}]]
+[\aa,\bb][\cc,\dd_{\ii-2}]
+[\cc,\dd_{\ii-2}](\aa\bb)-(\aa[\cc,\dd_{\ii-2}])b
+[\aa,[\cc,\dd_{\ii-2}]]b&\\
=&\aa[\bb,[\cc,\dd_{\ii-2}]]
+[\aa,\bb][\cc,\dd_{\ii-2}]
+[\cc,\dd_{\ii-2}](\aa\bb)-(\aa\bb)[\cc,\dd_{\ii-2}]
+[\aa,[\cc,\dd_{\ii-2}]]b&\\
=&\aa[\bb,[\cc,\dd_{\ii-2}]]
+[\aa,\bb][\cc,\dd_{\ii-2}]
-[\aa\bb, [\cc,\dd_{\ii-2}]]
+[\aa,[\cc,\dd_{\ii-2}]]b&\\
\in &\Id{\mN_{[\ii]}} +\mN_{[2]}\mN_{[\ii-1]}\subseteq \Id{\mN_{[\ii]}}.&
\end{align*}
Then, by Theorem~\ref{th-pro}, we immediately obtain the statement, thus finishing the proof.
\end{proof}

Consequently, we receive the main theorem of this section, providing that the notions of ``finite class" and ``Lie nilpotency" for Novikov algebras are the same.

\begin{thm}\label{nil-finclass}
A Novikov algebra~$\mN$ is Lie nilpotent if and only if~$\mN$ is of finite class. Consequently, for every Lie nilpotent Novikov algebra $\mN$, the ideal of~$\mN$ generated by~$\{ab-ba\mid \aa,\bb\in\mN\}$ is nilpotent.
\end{thm}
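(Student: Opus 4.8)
The plan is to prove the equivalence by establishing both implications through the machinery already assembled. The key observation is that the two chains of ideals under consideration are essentially the same object: on the one hand we have the lower central chain $H_\ii$ defined by $H_1 = \mN$, $H_{\ii+1} = \mN \circ H_\ii$, and on the other hand the commutator ideals $\Id{\mN_{[\ii]}}$ arising from the Lie-theoretic lower central series $\mN_{[\ii]}$. By Theorem~\ref{prod-com-id}, these coincide: $\Id{\mN_{[\ii]}} = H_\ii$ for all $\ii \geq 1$. This identification is the conceptual heart of the argument, since it translates the Lie-nilpotency condition (a statement about the $\mN_{[\ii]}$) directly into the finite-class condition (a statement about the $H_\ii$).

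For the forward direction, suppose $\mN$ is Lie nilpotent, so $\mN_{[\nn]} = (0)$ for some $\nn$. Then certainly $\Id{\mN_{[\nn]}} = (0)$, and by Theorem~\ref{prod-com-id} this gives $H_\nn = (0)$, which is precisely the statement that $\mN$ is of finite class. For the converse, suppose $\mN$ is of finite class, so $H_\nn = (0)$ for some $\nn$. Since $\mN_{[\ii]} \subseteq \Id{\mN_{[\ii]}} = H_\ii$ by Theorem~\ref{prod-com-id}, we get $\mN_{[\nn]} \subseteq H_\nn = (0)$, so $\mN$ is Lie nilpotent. Thus both implications follow almost immediately once Theorem~\ref{prod-com-id} is in hand.

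For the consequence about nilpotency of the commutator ideal, I would argue as follows. Assume $\mN$ is Lie nilpotent; by the equivalence just proved, $\mN$ is of finite class. The ideal generated by $\{ab - ba \mid \aa, \bb \in \mN\}$ is exactly $\Id{[\mN, \mN]} = \mN \circ \mN = H_2$ by the definitions in~\eqref{def-hi}. Now Theorem~\ref{th-pro} states precisely that when $\mN$ is of finite class, $H_2$ is nilpotent, with nilpotency index bounded by the class of $\mN$. This finishes the proof.

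Honestly, given the preparatory results, there is no genuine obstacle remaining in this final statement: the entire difficulty has already been absorbed into Theorem~\ref{prod-com-id} (which supplies the identification $\Id{\mN_{[\ii]}} = H_\ii$) and Theorem~\ref{th-pro} (which supplies the nilpotency of $H_2$). If I were to point to the one step requiring care, it would be making sure the chain of containments $\mN_{[\ii]} \subseteq \Id{\mN_{[\ii]}}$ is used in the correct direction and that the index bounds line up, but these are bookkeeping matters rather than conceptual hurdles. The theorem is thus best presented as a short corollary-style assembly of the two main technical results of the section.
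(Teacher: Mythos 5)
Your proposal is correct and follows exactly the paper's own route: the equivalence is read off from the identification $\Id{\mN_{[\ii]}}=H_\ii$ of Theorem~\ref{prod-com-id}, and the nilpotency of $H_2=\mN\circ\mN$ then comes from Theorem~\ref{th-pro}. The paper states this in one line; you have merely unpacked the same argument with the containments made explicit.
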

\begin{proof} By Theorem~\ref{prod-com-id}, we immediately obtain that a Novikov algebra is Lie nilpotent if and only if it is of finite class. Then, by Theorem~\ref{th-pro}, the second claim follows, thus finishing the proof.
\end{proof}

\section{Lie solvable Leavitt path algebras}
The main goal of this section is to provide necessary and sufficient conditions on $E$ and $K$ for which the Leavitt path algebra $L_K(E)$ is Lie solvable (Theorem \ref{sol-LPAs} and Corollary \ref{sol-LPAs1}). Consequently,  we obtain a complete description of Lie nilpotent Leavitt path algebras (Corollary \ref{nil-LPAs}), and
show that the Lie solvability of~$L_K(E)$ and the Lie nilpotency of $[L_K(E),L_K(E)]$ are the same (Corollary \ref{nil-LPAs1}).

We begin this section by recalling some useful notions. Let $K$ be a field and $\mA$ an associative algebra over $K$. Then  $\mA$  becomes a Lie algebra under the operation $[x, y] = xy - yx$ for all $x, y \in \mA$,  and~$(\mA, [-,-])$ is  called the {\it associated Lie algebra} of $\mA$. If $\mA$ is unital, then we denote by $\mathfrak{gl}(n,\mA)$ the associated Lie algebra of~$M_n(\mA)$ with~$\mA$-basis $\{E_{ij}\mid 1\leq \ii,\jj\leq \nn\}$, where~$E_{ij}$ is the matrix that the entry in the $i$th row and $j$th column is 1 while the other entries are 0. The following result is well known; and just for the reader's convenience, we reproduce it here.

\begin{lem}\label{mat-sol}  Let $K$ be an arbitrary field and let $\mA$ be either the field $K$ or the Laurent polynomial algebra~$K[x,x^{-1}]$. Then the following holds:

$(1)$ If~$\chart(K)\neq 2$, then for every integer~$n\geq 2$, the Lie algebra $\mathfrak{gl}(n, \mA)$ is not solvable;

$(2)$ If $\chart(K)=2$, then~$\mathfrak{gl}(2, \mA)$ is solvable but not nilpotent, and for every integer~$n\geq 3$, the Lie algebra $\mathfrak{gl}(n, \mA)$ is not solvable.
\end{lem}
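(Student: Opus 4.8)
The plan is to handle the non-solvability assertions by exhibiting, in each case, an explicit nonzero \emph{perfect} Lie subalgebra of $\mathfrak{gl}(n,\mA)$, and to settle the single borderline case $n=2$, $\chart(K)=2$ by computing the derived and lower central series directly. Two elementary facts drive everything: a subalgebra of a solvable Lie algebra is solvable, so producing a non-solvable subalgebra forces $\mathfrak{gl}(n,\mA)$ to be non-solvable; and a nonzero Lie algebra $L$ with $[L,L]=L$ has a constant derived series and is therefore not solvable. Since both $\mA=K$ and $\mA=K[x,x^{-1}]$ contain $K$, I may always use subalgebras with coefficients in $K$.

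For part $(1)$, with $\chart(K)\neq 2$ and $n\geq 2$, I would embed $\mathfrak{sl}(2,K)$ as the top-left $2\times 2$ block spanned by $E_{12}$, $E_{21}$ and $h=E_{11}-E_{22}$. The brackets $[E_{12},E_{21}]=h$, $[h,E_{12}]=2E_{12}$ and $[h,E_{21}]=-2E_{21}$ show that when $2$ is invertible this subalgebra equals its own derived algebra, hence is perfect and non-solvable, so $\mathfrak{gl}(n,\mA)$ is non-solvable. For part $(2)$ with $n\geq 3$ and $\chart(K)=2$, the identical strategy works with $\mathfrak{sl}(3,K)$ in place of $\mathfrak{sl}(2,K)$, embedded as the top-left $3\times 3$ block: the structure constants $[E_{ij},E_{jk}]=E_{ik}$ for distinct $i,j,k$ and $[E_{ij},E_{ji}]=E_{ii}-E_{jj}$ involve no factor of $2$, so they already recover every off-diagonal $E_{ij}$ and both trace-zero diagonal generators. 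Thus $\mathfrak{sl}(3,K)$ is perfect in every characteristic, in particular in characteristic $2$, and non-solvability of $\mathfrak{gl}(n,\mA)$ follows.

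The remaining case $n=2$, $\chart(K)=2$ is where the argument genuinely uses the characteristic, and I would do it by hand. Setting $L=\mathfrak{gl}(2,\mA)$, a short bracket table gives $[L,L]=\mA E_{12}+\mA E_{21}+\mA h$. In characteristic $2$ the vanishing of $[h,E_{12}]=2E_{12}$ and $[h,E_{21}]=-2E_{21}$ then collapses $[[L,L],[L,L]]$ to $\mA h$, after which $[\mA h,\mA h]=0$; this produces the derived series $L\supsetneq[L,L]\supsetneq\mA h\supsetneq(0)$ and proves solvability. For non-nilpotency I would compute the lower central series instead: the derived ideal $[L,L]$ satisfies $[L,[L,L]]\subseteq[L,L]$ automatically, while $E_{12}=[E_{11},E_{12}]$, $E_{21}=-[E_{11},E_{21}]$ and $h=[E_{12},E_{21}]$ all lie in $[L,[L,L]]$; hence $[L,[L,L]]=[L,L]\neq(0)$, the lower central series stabilizes at the nonzero ideal $[L,L]$, and $L$ is not nilpotent. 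I expect this last case to be the main obstacle, not because any step is deep but because it is the only point at which the characteristic-$2$ degeneration must be tracked carefully through the bracket relations, and where solvability and non-nilpotency must be verified by genuinely different computations rather than by the uniform perfect-subalgebra argument used everywhere else.
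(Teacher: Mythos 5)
Your proposal is correct and follows essentially the same route as the paper: exhibit the perfect subalgebra $\mathfrak{sl}(2,K)$ for $\chart(K)\neq 2$, a perfect subalgebra of $\mathfrak{gl}(3,K)$ for $\chart(K)=2$ (the paper's $\mathcal{L}=\mathrm{span}\{E_{ii}+E_{jj},E_{ij}\}$ coincides with your $\mathfrak{sl}(3,K)$ in characteristic $2$), and a direct derived-series computation for $\mathfrak{gl}(2,\mA)$ in characteristic $2$, with non-nilpotency coming from $[E_{11},E_{12}]=E_{12}$. No gaps.
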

\begin{proof} We first note that
since $[a E_{ij},  b E_{pq}] =ab[E_{ij}, E_{pq}]$ for all $a, b\in \mA$, the Lie algebra $\mathfrak{gl}(n, \mA)$ is solvable (respectively, nilpotent) if and only if~$\mathfrak{gl}(n, K)$ is solvable (respectively, nilpotent).

(1) Assume that $\chart(K)\neq 2$. We then have
$$[E_{12},E_{21}]=E_{11}-E_{22}, \ [E_{11}-E_{22}, E_{12}]=2E_{12},
\ [E_{11}-E_{22}, E_{21}]=-2E_{21},$$ so
$\mA(E_{11}-E_{22})+\mA E_{12}+\mA E_{21}$ is not Lie solvable.  This implies that for every integer $n\geq 2$, the Lie algebra $\mathfrak{gl}(n,\mA)$ is not solvable.

(2)  Assume that $\chart(K)=2$.  We then have
$$[\mathfrak{gl}(2,K), \mathfrak{gl}(2,K)]=\text{span}\{E_{11}+E_{22}, E_{12}, E_{21}\}$$
and $$[[\mathfrak{gl}(2,K), \mathfrak{gl}(2,K)],[\mathfrak{gl}(2,K), \mathfrak{gl}(2,K)]]=\text{span}\{E_{11}+E_{22}\}.$$
It follows that~$[\mathfrak{gl}(2,K), \mathfrak{gl}(2,K)]$ and~$[\mathfrak{gl}(2,K[x,x^{-1}]), \mathfrak{gl}(2,K[x,x^{-1}])]$ are nilpotent. In particular, $\mathfrak{gl}(2, \mA)$ is solvable.  On the other hand, since $[E_{11},E_{12}]=E_{12}$, we obtain that $\mathfrak{gl}(2,K)$ is  not nilpotent.
For $n=3$, define
$$\mathcal{L}=\text{span}\{E_{ii} + E_{jj}, E_{ij}\mid i\neq j, 1\leq i, j\leq 3\}\subseteq \mathfrak{gl}(3,K).$$
We then have $$[\mathfrak{gl}(3,K), \mathfrak{gl}(3,K)]\supseteq [\mathcal{L}, \mathcal{L}]=\mathcal{L},$$ so $\mathfrak{gl}(3,K)$ is not solvable. This implies that for all $n\geq 3$, the Lie algebra $\mathfrak{gl}(n, \mA)$ is not solvable, thus finishing the proof.
\end{proof}

The remainder of this section is to characterize Lie solvable and Lie nilpotent Leavitt path algebras. To do so, we need to recall the notions of graphs and Leavitt path algebras, and establish useful facts. We refer the reader to \cite{a:lpatfd} and \cite{AAS} for a detailed history and overview of Leavitt path algebras.

A (directed) graph $E = (E^0, E^1, s, r)$ consists of two disjoint sets $E^0$ and $E^1$, called \emph{vertices} and \emph{edges} respectively, together with two maps $s, r: E^1 \longrightarrow E^0$.  The vertices $s(e)$ and $r(e)$ are referred to as the \emph{source} and the \emph{range}
of the edge~$e$, respectively. A vertex~$v$ for which $s^{-1}(v)$ is empty is called a \emph{sink}; a vertex~$v$ is \emph{regular} if $0< |s^{-1}(v)| < \infty$; a vertex~$v$ is an \emph{infinite emitter} if $|s^{-1}(v)| = \infty$; and a vertex~$v$ is \emph{isolated} if $|s^{-1}(v)| =0 = |r^{-1}(v)|$. A graph $E$ is called {\it row-finite} if  $|s^{-1}(v)| < \infty$ for all $v\in E^0$.

A \emph{path of length} $n$ in a graph $E$ is a sequence $p = e_{1} \cdots e_{n}$  of edges $e_{1}, \dots, e_{n}$ such that $r(e_{i}) = s(e_{i+1})$ for $i = 1, \dots, n-1$.  In this case, we say that the path~$p$  starts at the vertex $s(p) := s(e_{1})$ and  ends at the vertex $r(p) := r(e_{n})$, we write $|p| = n$ for the length of $p$.
We denote by $p^0$ the set of its vertices, that is, $p^0:= \{s(e_i)\mid 1\le i\le n\} \cup \{r(e_n)\}$. We consider the elements of $E^0$ to be paths of length $0$.
A path  $p= e_{1} \cdots e_{n}$ of positive length is a \textit{cycle based at} the vertex $v$ if $s(p) = r(p) =v$ and the vertices $s(e_1), s(e_2), \hdots, s(e_n)$ are distinct. A cycle c is called a {\it loop} if $|c| = 1$. Two cycles $c$ and $d$ is called {\it distinct} if $c^0 \neq d^0$. An edge $f$ is an \emph{exit} for a path $p = e_1 \cdots e_n$ if $s(f) = s(e_i)$ but $f \ne e_i$ for some $1 \le i \le n$. A graph $E$ is said to be a {\it no-exit graph} if no cycle in $E$ has an exit.

\begin{defn}\label{LPAs}
For an arbitrary graph $E = (E^0,E^1,s,r)$
and an arbitrary field $K$, the \emph{Leavitt path algebra} $L_{K}(E)$ {\it of the graph}~$E$
\emph{with coefficients in}~$K$ is the $K$-algebra generated
by the union of the set $E^0$ and two disjoint copies of $E^1$, say $E^1$ and $\{e^*\mid e\in E^1\}$, satisfying the following relations for all $v, w\in E^0$ and $e, f\in E^1$:
\begin{itemize}
\item[(1)] $v w = \delta_{v, w} w$;
\item[(2)] $s(e) e = e = e r(e)$ and $r(e) e^* = e^* = e^*s(e)$;
\item[(3)] $e^* f = \delta_{e, f} r(e)$;
\item[(4)] $v= \sum_{e\in s^{-1}(v)}ee^*$ for every regular vertex $v$;
\end{itemize}
where $\delta$ is the Kronecker delta.
\end{defn}
For every path $p = e_1e_2\cdots e_n$, the element $e^*_n\cdots e^*_2e^*_1$ of $L_K(E)$ is denoted by $p^*$. In particular, for every~$v\in E^0$, by~$v^*$ we mean~$v$.
It can be shown (\cite[Lemma 1.7]{ap:tlpaoag05}) that $L_K(E)$ is  spanned as a $K$-vector space by $\{pq^* \mid p, q \mbox{ are paths in $E$ with }$ $ r(p) = r(q)\}$.

The following lemma provides us with a necessary condition for Leavitt path algebras  to be Lie solvable.

\begin{lem}\label{cyc-exi}
Let $K$ be an arbitrary field and $E$ an arbitrary graph such that $L_{K}(E)$ is Lie solvable. Then $E$ is a no-exit graph.
\end{lem}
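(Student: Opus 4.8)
The plan is to argue by contraposition: assuming $E$ is \emph{not} a no-exit graph, I will produce a copy of the non-solvable matrix Lie algebra $\mathfrak{gl}(3,K)$ as a Lie subalgebra of $(L_K(E),[-,-])$, contradicting Lie solvability. So suppose some cycle of $E$ has an exit. Since a cycle may be based at any of its vertices, after re-basing I may write the cycle as $c = e_1\cdots e_n$ with base vertex $v = s(e_1) = r(e_n)$ and assume the exit is an edge $f$ with $s(f) = v$ and $f \neq e_1$; set $w := r(f)$.

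First I would record the orthogonality relations coming from the Leavitt relations in Definition~\ref{LPAs}. From relation~(3), $e_1^* f = 0 = f^* e_1$ because $f \neq e_1$, and this gives $c^* f = 0$ and $f^* c = 0$, while $c^* c = v$ and $f^* f = w$. For each integer $i \geq 0$ put $t_i := c^i f$, a path from $v$ to $w$. A direct computation with the identities above then yields the ``orthonormality''
$$t_i^* t_j = f^*(c^*)^i c^j f = \delta_{ij}\, w \qquad (i, j \geq 0),$$
where the cases $j>i$, $j=i$, $j<i$ are treated by first reducing $(c^*)^i c^j$ to a power of $c$ or of $c^*$ (using $c^* c = v$) and then applying $f^* c = 0 = c^* f$.

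Next I would set $E_{ij} := t_i t_j^*$ for $i,j \geq 0$. Since $t_i^* t_j = \delta_{ij}\, w$ and $t_i w = t_i$, it follows immediately that $E_{ij}E_{kl} = \delta_{jk}E_{il}$, so the $E_{ij}$ are matrix units; because $w \neq 0$ each $E_{ii}$ is a nonzero idempotent and the $E_{ij}$ are linearly independent. Hence the $K$-span of $\{E_{ij}\mid 0\leq i,j\leq 2\}$ is an associative subalgebra of $L_K(E)$ isomorphic to $M_3(K)$, and its associated Lie algebra is a Lie subalgebra of $(L_K(E),[-,-])$ isomorphic to $\mathfrak{gl}(3,K)$.

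Finally, since a Lie subalgebra of a solvable Lie algebra is solvable, Lie solvability of $L_K(E)$ would force $\mathfrak{gl}(3,K)$ to be solvable, whereas Lemma~\ref{mat-sol} shows $\mathfrak{gl}(3,K)$ is non-solvable in every characteristic; this contradiction completes the proof. The one point that genuinely needs care, and the reason the exit hypothesis is precisely what is required, is the construction of infinitely many mutually orthogonal paths $t_i = c^i f$: the exit $f$ supplies an edge ``transverse'' to the cycle, and iterating $c$ manufactures the whole family. I would stress that a single copy of $\mathfrak{gl}(2,K)$ would \emph{not} suffice, since it is solvable in characteristic $2$ by Lemma~\ref{mat-sol}(2); this is exactly why one must extract a $3\times 3$ block, which the family $\{t_0,t_1,t_2,\dots\}$ readily provides.
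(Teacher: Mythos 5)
Your proposal is correct and follows essentially the same route as the paper: both build a copy of $M_3(K)$ inside $L_K(E)$ spanned by the elements $c^i f f^*(c^j)^*$ (your $t_it_j^*$, up to an index shift) and then invoke Lemma~\ref{mat-sol} to contradict Lie solvability. The only cosmetic difference is that you verify linear independence directly from the matrix-unit relations and the nonvanishing of the idempotents $t_it_i^*$, whereas the paper cites \cite[Theorem 1]{aajz:lpaofgkd} for the linear independence of these nine elements.
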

\begin{proof} Assume that $E$ has a cycle $c$ based at $v$ with an exit $f$. Without loss of generality, we may assume that $s(f) = v$.
By \cite[Theorem 1]{aajz:lpaofgkd}, the set
$$\{c^nf f^*(c^m)^*\mid 1\leq n\leq 3, 1\leq m\leq 3\}$$
is linearly independent in $L_{K}(E)$.  We also note that $c^*f=f^*c=0$, $c^*c = v$, $f^*v = f^*$ and $vf = f$. From these observations, it is straightforward to show  that  the set  $\text{span}\{c^nf f^*(c^m)^*\mid 1\leq n\leq 3, 1\leq m\leq 3\}$ is a $K$-subalgebra of $L_K(E)$ which is isomorphic to the matrix algebra $M_3(K)$ via the map: $c^nf f^*(c^m)^*\mapsto E_{nm}$.  Then, by Lemma~\ref{mat-sol}, we have that $L_{K}(E)$ contains a subalgebra which is not Lie solvable, and so $L_{K}(E)$ is not Lie solvable,  a contradiction.  Therefore $E$ is a no-exit graph, thus finishing the proof.
\end{proof}

We are now in position to provide the main result of this section  characterizing Lie solvable Leavitt path algebras.

\begin{thm}\label{sol-LPAs}
Let $K$ be an arbitrary field and $E$ an arbitrary graph. Then the following holds:

$(1)$ If $\chart(K)=2$, then $L_K(E)$ is Lie solvable if and only if if $E$	is a no-exit graph satisfying the following condition: every vertex $v$ in $E$ is either a sink, or in a cycle whose length is at most $2$, or for each $e\in s^{-1}(v)$, $r(e)$ is either a sink with $r^{-1}(r(e)) = \{e\}$ or in a loop $f$ with $r^{-1}(r(e)) = \{e, f\}$.

$(2)$ If $\chart(K)\neq 2$, then $L_K(E)$ is Lie solvable if and only if $E$ is  a disjoint union of isolated vertices and loops. In this case, $L_K(E) \cong K^{(\Upsilon_1)} \oplus K[x, x^{-1}]^{(\Upsilon_2)}$, where $\Upsilon_1$ is the set of all isolated vertices in $E$ and $\Upsilon_2$ is the set of all loops in $E$.
\end{thm}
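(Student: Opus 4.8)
The plan is to treat the two characteristic cases in parallel, using two recurring devices: first, Lemma~\ref{cyc-exi} to reduce immediately to no-exit graphs; and second, a \emph{matrix-realization} principle producing subalgebras isomorphic to $M_n(K)$ or $M_n(K[x,x^{-1}])$ inside $L_K(E)$, so that non-solvability can be detected through Lemma~\ref{mat-sol}. For the matrix realization I would first record the elementary fact that whenever $p_1,\dots,p_n$ are pairwise incomparable paths with a common range vertex $u$ (no $p_i$ a prefix of another), the Leavitt relations give $p_i^* p_j = \delta_{ij}u$, so the elements $\{p_i p_j^*\}$ form $n^2$ matrix units spanning a copy of $M_n(K)$; if moreover $u$ lies on a cycle, adjoining the Laurent variable coming from that cycle upgrades the copy to $M_n(K[x,x^{-1}])$. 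Linear independence of the relevant $pq^*$ is guaranteed by the standard basis/normal-form results for Leavitt path algebras (and, in the concrete small cases, by the independence statement already invoked in the proof of Lemma~\ref{cyc-exi}).

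For the necessity (``only if'') directions I would assume $L_K(E)$ is Lie solvable, so $E$ is no-exit, and then argue contrapositively that any forbidden configuration yields a non-solvable subalgebra. When $\chart(K)\neq 2$ this is quick: a single non-loop edge $e\colon v\to w$ already produces the matrix units $\{ee^*,\,e^*e,\,e,\,e^*\}$ spanning $M_2(K)$, and since a cycle of length $\geq 2$ contains a non-loop edge it is covered as well; both configurations are non-solvable by Lemma~\ref{mat-sol}(1). Hence every edge must be a loop, no vertex can carry two loops (no-exit) or an extra incoming edge (again a non-loop edge), so $E$ is a disjoint union of isolated vertices and loops, from which the displayed isomorphism $L_K(E)\cong K^{(\Upsilon_1)}\oplus K[x,x^{-1}]^{(\Upsilon_2)}$ follows from the single-vertex and single-loop computations. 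When $\chart(K)=2$ the $M_2$'s are harmless, so I must instead produce a copy of $M_3(K)$ (or $M_{\geq 3}(K[x,x^{-1}])$), non-solvable by Lemma~\ref{mat-sol}(2). Assuming a vertex $v$ violates all three listed alternatives, it emits an edge $e\colon v\to w$ with $w=r(e)$ not of the two permitted private types, and I would run the short case analysis: if $w$ is a sink with a second incoming edge, or a cycle vertex with an incoming edge beyond $\{e,f\}$, one gets three incomparable paths into $w$; if $w$ lies on a cycle of length $\geq 2$ or merely emits some further edge $e_1$, then $w,\,e_1,\,ee_1$ (resp.\ the three analogous paths, using $w_1=r(e_1)\neq v,w$) are three pairwise incomparable paths into a common vertex. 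In every case the matrix-realization principle yields $M_3(K)$, a contradiction.

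For the sufficiency (``if'') directions I would show that the stated graph conditions force $L_K(E)$ to decompose, as an algebra, into a direct sum of ideals each isomorphic to one of $K$, $K[x,x^{-1}]$, $M_2(K)$, or $M_2(K[x,x^{-1}])$. In $\chart(K)\neq 2$ this is immediate: the components are isolated vertices and single loops, giving a commutative algebra of derived length $\le 1$. In $\chart(K)=2$ I would analyse the connected components under the vertex hypothesis: a sink contributes $K$; a cycle of length $\le 2$ contributes $K[x,x^{-1}]$ or $M_2(K[x,x^{-1}])$; and a ``hub'' vertex of type~(c), whose edges run only to private sinks or private loop vertices, contributes a direct sum of $M_2(K)$ and $M_2(K[x,x^{-1}])$ blocks, the orthogonality of the idempotents $ee^*$ decoupling the blocks. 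Crucially, no-exit together with (c) prevents any deeper branching (a hub cannot feed another hub, and nothing can feed a cycle of length $\ge 2$), so these components are genuinely disjoint. Since each block has derived length at most $3$ by Lemma~\ref{mat-sol}(2), and the derived series of a direct sum of ideals is the direct sum of the derived series, $L_K(E)^{(3)}=0$ and $L_K(E)$ is Lie solvable.

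The main obstacle I anticipate is the sufficiency direction in characteristic $2$: converting the purely combinatorial vertex condition into an honest algebra direct-sum decomposition, uniformly for arbitrary (in particular non-row-finite) graphs. Infinite emitters are the delicate point, since relation~(4) no longer expresses the hub vertex as $\sum ee^*$, so the block containing such a vertex must be identified by hand and checked to remain solvable; and with infinitely many blocks one must rely on the uniform bound (derived length $\le 3$) rather than on any finiteness of the decomposition. The char-$2$ necessity analysis, while requiring care to cover every way the vertex condition can fail, is comparatively routine once the three-incomparable-paths principle is in place.
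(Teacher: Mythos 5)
Your necessity arguments are sound and close in spirit to the paper's: the paper likewise manufactures copies of $M_3(K)$ from three prefix-incomparable paths into a common vertex (its Cases 1 and 2) and invokes Lemma~\ref{mat-sol}, and your observation that in characteristic $\neq 2$ a single non-loop edge $e$ already yields a non-solvable copy of $\mathfrak{gl}(2,K)$ spanned by $\{ee^{*},\, r(e),\, e,\, e^{*}\}$ is a legitimate shortcut past the paper's detour through the structure of the ideal generated by sinks and cycle vertices.

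The genuine gap is in the sufficiency direction for $\chart(K)=2$. Your plan rests on the claim that the graph conditions force $L_K(E)$ to decompose, as an algebra, into a direct sum of ideals each isomorphic to $K$, $K[x,x^{-1}]$, $M_2(K)$ or $M_2(K[x,x^{-1}])$. This is false once $E$ has an infinite emitter: for the infinite clock graph $C$ (one vertex $v$ emitting edges $e_n$ to sinks $w_n$, $n\in\mathbb{N}$), relation (4) does not apply at $v$, the ideal generated by the sinks is $I\cong\bigoplus_n M_2(K)$ with $v\notin I$, and the smallest ideal containing $v$ is all of $L_K(C)$, which is not isomorphic to any of your four block types; no decomposition of the asserted form exists. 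You flag infinite emitters as the delicate point and propose to ``identify the block containing such a vertex by hand and check it remains solvable,'' but that block is the whole algebra, so as stated the step is circular. The missing idea --- and the paper's actual route --- is to work one level down: let $I$ be the ideal generated by the sinks and the vertices lying on cycles. The vertex condition forces every edge, every ghost edge and every regular vertex into $I$, so $L_K(E)/I\cong K^{(S)}$ with $S$ the set of infinite emitters; since this quotient is commutative, $[L_K(E),L_K(E)]\subseteq I$. The ideal $I$ itself genuinely is a direct sum of blocks $M_{n}(K)$ and $M_{m}(K[x,x^{-1}])$ with $n,m\le 2$ by \cite[Corollary 2.7.5 (i)]{AAS}, hence Lie solvable of uniformly bounded derived length by Lemma~\ref{mat-sol}(2), and one concludes using the standard fact that an algebra is Lie solvable if and only if its derived subalgebra is. Without this quotient step (or an equivalent substitute), your characteristic-$2$ sufficiency argument does not go through for non-row-finite graphs, which are explicitly within the scope of the theorem.
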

\begin{proof}
Assume that~$L_K(E)$ is Lie solvable. By Lemma \ref{cyc-exi}, $E$ is a no-exit graph. Consider the following cases.

{\it Case} 1.  There exist two distinct edges $e$ and $f$ in $E$ such that $s(f) \neq r(f) = s(e)\neq r(e)$ and $s(f)\neq r(e)$. Let $v := r(e)$ and let $A$ be the  $K$-subspace of $L_K(E)$ spanned by $fee^*f^*$, $ee^*$, $ v$, $fee^*$, $ee^*f^*$, $e$, $e^*$,   $fe$,  and $e^*f^*$. Then it is straightforward to show that $A$ is a subalgebra of $L_K(E)$, which is isomorphic to the matrix algebra $M_3(K)$ via the map: $fee^*f^*\longmapsto E_{11}$, $ee^* \longmapsto E_{22}$, $v \longmapsto E_{33}$, $fee^* \longmapsto E_{12}$, $ee^*f^* \longmapsto E_{21}$, $e \longmapsto E_{23}$, $e^* \longmapsto E_{32}$, $fe \longmapsto E_{13}$ and $e^*f^* \longmapsto E_{31}$. By Lemma \ref{mat-sol}, $L_K(E)$ is not Lie solvable, a contradiction.

{\it Case} 2.  There exist two distinct edges $e$ and $f$ in $E$ such that $s(e)\neq r(e)$, $s(f)\neq r(f)$ and $r(e) = r(f) = v \in E^0.$ Let $B$ be the  $K$-subspace of $L_K(E)$ spanned by    $ee^*$, $ff^* $, $v$,  $ef^* $, $fe^* $,  $e $, $e^*$, $f $ and $f^* $.  Then it is straightforward to show that $B$ is a subalgebra of $L_K(E)$, which is isomorphic to $M_3(K)$ via the map: $ee^*\longmapsto E_{11}$, $ff^* \longmapsto E_{22}$, $v\longmapsto E_{33}$,  $ef^* \longmapsto E_{12}$, $fe^* \longmapsto E_{21}$,  $e \longmapsto E_{13}$, $e^* \longmapsto E_{31}$, $f \longmapsto E_{23}$ and $f^* \longmapsto E_{32}$. By Lemma \ref{mat-sol}, $L_K(E)$ is not Lie solvable, a contradiction.

In any case, we arrive at a contradiction, and so $E$ is a no-exit graph satisfying the following condition: every vertex $v$ in $E$ is either a sink, or in a cycle whose length is at most $2$, or for each $e\in s^{-1}(v)$, $r(e)$ is either a sink with $r^{-1}(r(e)) = \{e\}$ or in a loop $f$ with $r^{-1}(r(e)) = \{e, f\}$. In particular, no paths of positive length end at an infinite emitter.

Let~$I$ be the ideal of~$L_K(E)$ generated by~$\Upsilon_1 \cup \{c^0\mid c\in \Upsilon_2\}$, where~$\Upsilon_1$ is the set of all sinks in~$E$ and $\Upsilon_2$ is the set of all distinct cycles in $E$.  By \cite[Corollary 2.7.5 (i)]{AAS}, we have
\begin{equation}\label{i-form}
  I \cong (\bigoplus_{v\in \Upsilon_1}M_{n(v)}(K)) \oplus (\bigoplus_{c\in \Upsilon_2}M_{m(c)}(K[x, x^{-1}])),
\end{equation}
where $n(v)$ is the number of all paths ending in the sink $v$, and $m(c)$ is the number of all paths ending in a fixed vertex of the cycle $c$ which do not contain the cycle itself. Since~$L_{K}(E)$ is Lie solvable, we have~$n(v)\leq 2$ for every $v\in \Upsilon_1$ and~$m(c)\leq 2$ for every $c\in \Upsilon_2$.

Let $e$ be an edge in $E$. Since $r(e)$ is either a sink, or in a cycle whose length is at most 2, we must have $r(e)\in I$,  $e = er(e)\in I$ and $e^* = r(e)e^*\in I$. For every regular vertex $v\in E^0$, we have $v = \sum_{e \in s^{-1}(v)} ee^* \in I$.  Combining these observations and the fact that no paths of positive length end at an infinite emitter, we deduce
\begin{equation}\label{for-row-fi}
L_K(E)/I\cong K^{(S)}
 \end{equation}
 as $K$-algebras, where $S$ is the set of all infinite emitters in $E$. Now we have to consider the characteristic of the field~$K$.

(1) If~$\chart(K)=2$, then we already obtain the necessary condition. Next we prove the sufficient condition. Since $K^{(S)}$ is a commutative algebra, we immediately obtain that $[L_K(E), L_K(E)] \subseteq I$.   Then by Lemma \ref{mat-sol} (2) and by the fact that a direct sum of an arbitrary family of Lie solvable algebras is also Lie solvable, we deduce that $I$ is Lie solvable, and so $[L_K(E), L_K(E)]$ is Lie solvable.  On the other hand, it is well known that $L_K(E)$ is Lie solvable if and only if so is $[L_K(E), L_K(E)]$. From these observations we have that $L_K(E)$ is Lie solvable, showing the sufficient condition.

(2) If $\chart(K)\neq 2$, then since $L_K(E)$ is Lie solvable, we immediately obtain that $I$ is Lie solvable. Then, by Lemma~\ref{mat-sol} (1), we have~$n(v)=1$ for every $v\in \Upsilon_1$, and $m(c) = 1$ for  every $c\in \Upsilon_2$. In other words, every vertex $v\in \Upsilon_1$ is isolated and every cycle $c\in \Upsilon_2$ is a loop such that~$s^{-1}(s(c))=r^{-1}(s(c))=\{c\}$. This implies that $E$ is exactly  a disjoint union of isolated vertices and loops. In this case, we have
$L_K(E) \cong K^{(\Upsilon_1)} \oplus K[x, x^{-1}]^{(\Upsilon_2)}$. The converse immediately follows from the note that $L_K(E)$ is commutative, thus finishing the proof.
\end{proof}

By the germane ideas in the proof of Theorem \ref{sol-LPAs},  we  determine the structure of the Leavitt path algebra $L_K(E)$ of a row-finite graph $E$ over a field $K$ of characteristic $2$ which is Lie solvable.


\begin{cor}\label{sol-LPAs1}
Let $K$ be a field with $\chart(K)= 2$ and let $E$ be a row-finite graph. Then $L_K(E)$ is Lie solvable if and only if $$L_K(E) \cong (\bigoplus_{v\in \Upsilon_1}M_{n(v)}(K)) \oplus (\bigoplus_{c\in \Upsilon_2}M_{m(c)}(K[x, x^{-1}])),$$ where $\Upsilon_1$ is the set of all sinks in $E$, $\Upsilon_2$ is the set of all distinct cycles in $E$, $n(v)$ is both at most $2$ and the number of all paths ending in the sink $v$, and $m(c)$ is both at most $2$ and the number of all paths ending in a fixed vertex of the cycle $c$ which do not contain the cycle itself.	
\end{cor}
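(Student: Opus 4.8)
The plan is to read this corollary off from Theorem~\ref{sol-LPAs}(1) and the structural isomorphism~\eqref{i-form} produced in its proof, the only genuinely new input being that a row-finite graph has no infinite emitters.

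First I would handle the forward implication. Suppose $L_K(E)$ is Lie solvable. Since $\chart(K)=2$, the proof of Theorem~\ref{sol-LPAs}(1) already supplies everything I need: it shows that $E$ is a no-exit graph satisfying the stated vertex condition, it introduces the ideal $I$ generated by $\Upsilon_1\cup\{c^0\mid c\in\Upsilon_2\}$ (with $\Upsilon_1$ the set of sinks and $\Upsilon_2$ the set of distinct cycles), it establishes the isomorphism~\eqref{i-form} with $n(v)\leq 2$ for all $v\in\Upsilon_1$ and $m(c)\leq 2$ for all $c\in\Upsilon_2$ (these bounds coming from the Lie solvability of $I$ together with Lemma~\ref{mat-sol}(2)), and finally it proves $L_K(E)/I\cong K^{(S)}$, where $S$ is the set of infinite emitters of $E$. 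The key observation is that a row-finite graph satisfies $|s^{-1}(v)|<\infty$ for every vertex $v$, so $E$ has no infinite emitters and $S=\emptyset$. Consequently $L_K(E)/I\cong K^{(\emptyset)}=(0)$, that is, $I=L_K(E)$, and substituting this into~\eqref{i-form} yields exactly the asserted decomposition together with the required bounds on $n(v)$ and $m(c)$.

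For the converse I would argue purely from the displayed isomorphism. If $L_K(E)$ is isomorphic to $(\bigoplus_{v\in\Upsilon_1}M_{n(v)}(K))\oplus(\bigoplus_{c\in\Upsilon_2}M_{m(c)}(K[x,x^{-1}]))$ with every $n(v)\leq 2$ and every $m(c)\leq 2$, then as a Lie algebra $L_K(E)$ is a direct sum of copies of $\mathfrak{gl}(n(v),K)$ and $\mathfrak{gl}(m(c),K[x,x^{-1}])$, all of index at most $2$. By Lemma~\ref{mat-sol}(2) each such summand is Lie solvable, and since a direct sum of an arbitrary family of Lie solvable algebras is again Lie solvable, I conclude that $L_K(E)$ is Lie solvable. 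Rather than a real obstacle, the one point demanding attention is the reduction $I=L_K(E)$ in the forward direction: once row-finiteness is used to annihilate the quotient $K^{(S)}$, the corollary is a transcription of~\eqref{i-form} with the solvability bound from Lemma~\ref{mat-sol}(2).
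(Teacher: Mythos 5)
Your proposal is correct and follows essentially the same route as the paper: both directions rest on re-running the argument of Theorem~\ref{sol-LPAs}(1) to get the ideal $I$ with the decomposition~\eqref{i-form} and the bounds $n(v),m(c)\leq 2$, then using row-finiteness to conclude $I=L_K(E)$, with the converse coming from Lemma~\ref{mat-sol}(2) and closure of Lie solvability under direct sums. The only cosmetic difference is that you deduce $I=L_K(E)$ from $L_K(E)/I\cong K^{(S)}$ with $S=\emptyset$, whereas the paper notes directly that all vertices, edges and ghost edges lie in $I$; these are the same underlying observation.
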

\begin{proof}

Assume that $L_K(E)$ is Lie solvable. By repeating the method described in the proof of Theorem \ref{sol-LPAs}, we have that $E$ is a no-exit graph satisfying the following condition: every vertex $v$ in $E$ is either a sink, or in a cycle whose length is at most $2$, or for each $e\in s^{-1}(v)$, $r(e)$ is either a sink with $r^{-1}(r(e)) = \{e\}$ or in a loop $f$ with $r^{-1}(r(e)) = \{e, f\}$.  Let~$I$ be  the ideal of~$L_K(E)$ generated by~$\Upsilon_1 \cup \{c^0\mid c\in \Upsilon_2\}$, where~$\Upsilon_1$ is the set of all sinks in~$E$ and $\Upsilon_2$ is the set of all distinct cycles in $E$. Then we have
$$I \cong (\bigoplus_{v\in \Upsilon_1}M_{n(v)}(K)) \oplus (\bigoplus_{c\in \Upsilon_2}M_{m(c)}(K[x, x^{-1}])),$$
where $n(v)$ is both at most $2$ and the number of all paths ending in the sink $v$, and $m(c)$ is both at most $2$ and the number of all paths ending in a fixed vertex of the cycle $c$ which do not contain the cycle itself. As was shown in the proof of Theorem \ref{sol-LPAs} (1), $v\in I$ for all vertex $v\in E^0$ with $|s^{-1}(v)| < \infty$, and $e,\, e^*\in I$ for all $e\in E$. Then, since $E$ is a row-finite graph, we have $E^0 \cup \{e, \, e^*\mid e\in E^1\}\subseteq I$, and so $L_K(E) = I$, proving the necessary condition. The sufficient condition follows from Lemma \ref{mat-sol} and  the well known fact that a directed sum of an arbitrary family of Lie solvable algebras is also Lie solvable, thus finishing the proof.
\end{proof}

For clarification, we illustrate Theorem \ref{sol-LPAs} and Corollary \ref{sol-LPAs1} by presenting the following examples.

\begin{exas}
(1) Let $K$ be a field and $C = (C^0, C^1, s, r)$ the graph with $C^0 = \{v, w_n\mid n\in \mathbb{N}\}$, $C^1 = \{e_n\mid n \in \mathbb{N}\}$, $s(e_n) = v$ and $r(e_n) = w_n$ for all $n$. (So $C$ is the ``infinite clock" graph described in \cite[Example 1.6.12]{AAS}.) Then, by Theorem \ref{sol-LPAs}, $L_K(C)$ is Lie solvable if and only if $\chart(K) = 2$.

(2) Let $K$ be a field and  $E = (E^0, E^1, s, r)$ the graph with $E^0 = \{u, v_1, v_2,  v_3, v_4\}$, $E^1 = \{e_1, e_2, e_3, e_4, f, g\}$, $s(e_i) = u$, $r(e_i) = v_i$, $s(f) = r(f) = v_3$, and  $s(g) = r(g) =v_4$. We then have $\Upsilon_1 = \{v_1, v_2\}$, $\Upsilon_2 = \{f, g\}$ and $$L_K(E)\cong M_2(K) \oplus M_2(K) \oplus M_2(K[x, x^{-1}])\oplus M_2(K[x, x^{-1}]).$$ By Theorem \ref{sol-LPAs} and Corollary \ref{sol-LPAs1}, $L_K(E)$ is Lie solvable if and only if $\chart(K) = 2$.
\end{exas}


As a corollary of Theorem \ref{sol-LPAs}, we obtain a complete description of Lie nilpotent Leavitt path algebras.

\begin{cor}\label{nil-LPAs}
For every field $K$ and every graph $E$, the following conditions are equivalent:

$(1)$ $L_K(E)$ is Lie nilpotent;

$(2)$ $E$ is  a disjoint union of isolated vertices and loops;

$(3)$ $L_K(E) \cong K^{(\Upsilon_1)} \oplus K[x, x^{-1}]^{(\Upsilon_2)}$, where $\Upsilon_1$ is the set of all isolated vertices in $E$ and $\Upsilon_2$ is the set of all loops in $E$.
\end{cor}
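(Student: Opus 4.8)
The plan is to establish the cycle of implications $(3)\Rightarrow(1)\Rightarrow(2)\Rightarrow(3)$, of which two directions are routine. For $(3)\Rightarrow(1)$: the algebra $K^{(\Upsilon_1)}\oplus K[x,x^{-1}]^{(\Upsilon_2)}$ is commutative, so its associated Lie algebra is abelian; hence $[L_K(E),L_K(E)]=(0)$ and $L_K(E)$ is Lie nilpotent of class at most $1$. For $(2)\Rightarrow(3)$: a Leavitt path algebra respects disjoint unions of graphs, the Leavitt path algebra of a single isolated vertex is $K$, and that of a single loop is $K[x,x^{-1}]$; summing over the connected components of $E$ yields the stated isomorphism. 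This is exactly the isomorphism recorded in Theorem~\ref{sol-LPAs}(2), whose derivation is purely structural and does not use the characteristic of $K$.

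The heart of the matter is $(1)\Rightarrow(2)$. Suppose $L_K(E)$ is Lie nilpotent. Since every Lie nilpotent algebra is Lie solvable, Lemma~\ref{cyc-exi} shows that $E$ is a no-exit graph. The key input is a uniform, characteristic-free obstruction to nilpotency produced by any edge that is not a loop: if $e\in E^1$ satisfies $s(e)\neq r(e)$, then $r(e)e=0$ forces $e^2=0$, while $e^*e=r(e)$ and $er(e)=e$ give
\[
[ee^*,e]=(ee^*)e-e(ee^*)=e(e^*e)-e^2e^*=er(e)-0=e.
\]
Writing $L_{[1]}=L_K(E)$ and $L_{[n+1]}=[L_K(E),L_{[n]}]$ for the lower central series, an immediate induction from $e=[ee^*,e]$ shows $e\in L_{[n]}$ for every $n$; as $e\neq 0$ this contradicts Lie nilpotency. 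Hence every edge of $E$ is a loop. Since all edges are loops, every edge incident to a vertex $v$ is a loop at $v$; two such loops would make one an exit for the other, impossible in a no-exit graph. Therefore each vertex carries at most one loop and a vertex carrying a loop has no other incident edge, so the components of $E$ are exactly isolated vertices and single loops, which is condition $(2)$.

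The main obstacle is precisely this step ``Lie nilpotent $\Rightarrow$ every edge is a loop'', and its subtlety is the gap between solvability and nilpotency in characteristic $2$. One could instead try to read $(1)\Rightarrow(2)$ off Theorem~\ref{sol-LPAs}: when $\chart(K)\neq 2$, part $(2)$ already forces $E$ to be a disjoint union of isolated vertices and loops. But when $\chart(K)=2$, Theorem~\ref{sol-LPAs}(1) permits genuinely non-nilpotent configurations — cycles of length $2$, matrix blocks $M_2(K)$ and $M_2(K[x,x^{-1}])$, and infinite emitters — all of which are Lie solvable yet, by Lemma~\ref{mat-sol}(2), not Lie nilpotent. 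So in characteristic $2$ one must invoke nilpotency itself to excise these pieces: either, as above, through the single identity $[ee^*,e]=e$, or by observing that the ideal $I$ of \eqref{i-form} is Lie nilpotent as an ideal of a Lie nilpotent algebra, whence Lemma~\ref{mat-sol}(2) forces $n(v)=1$ for each sink and $m(c)=1$ for each cycle, after which the no-exit condition and the absence of incoming edges at sinks and loop vertices eliminate infinite emitters and leave only isolated vertices and loops. I expect the $\operatorname{ad}(ee^*)$ computation to be the cleaner route, since it treats both characteristics at once and avoids the bookkeeping attached to \eqref{i-form}.
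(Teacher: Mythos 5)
Your proposal is correct, and for the essential implication $(1)\Rightarrow(2)$ it takes a genuinely different route from the paper. The paper reuses the machinery of Theorem~\ref{sol-LPAs}: it passes to the ideal $I$ generated by the sinks and cycle vertices, invokes the structural isomorphism \eqref{i-form} from the Abrams--Ara--Siles Molina book, and then applies Lemma~\ref{mat-sol}(2) to the resulting matrix blocks to force $n(v)=m(c)=1$; the elimination of infinite emitters and the final reading-off of the graph shape lean on the case analysis carried out inside the proof of Theorem~\ref{sol-LPAs}. You instead apply the obstruction directly in $L_K(E)$: for any edge $e$ with $s(e)\neq r(e)$ the identity $[ee^*,e]=e$ (which is the same underlying phenomenon as $[E_{11},E_{12}]=E_{12}$ in Lemma~\ref{mat-sol}, but used without first decomposing $I$ into matrix algebras) places the nonzero element $e$ in every term of the lower central series, so nilpotency forces every edge to be a loop, and Lemma~\ref{cyc-exi} then pins down the graph. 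Your computation is valid ($e^2=0$ since $r(e)\neq s(e)$, $e^*e=r(e)$, and $e\neq 0$ by the standard nonvanishing of edges in a Leavitt path algebra, which the paper already relies on implicitly in Lemma~\ref{cyc-exi}), and it is uniform in the characteristic, which is exactly where the paper has to work hardest. What your route buys is a shorter, more self-contained argument that avoids \eqref{i-form} entirely; what the paper's route buys is consistency with the template already set up for Corollary~\ref{sol-LPAs1} and Corollary~\ref{nil-LPAs1}, where the ideal $I$ and its matrix description are needed anyway.
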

\begin{proof} (1)$\Longrightarrow$(2). Assume that $L_K(E)$ is Lie nilpotent. Since every nilpotent Lie algebra is solvable, $L_K(E)$ is Lie solvable.  Let $I$ be the ideal of~$L_K(E)$ generated by~$\Upsilon_1 \cup \{c^0\mid c\in \Upsilon_2\}$, where~$\Upsilon_1$ is the set of all sinks in~$E$ and $\Upsilon_2$ is the set of all distinct cycles in $E$. Then  we have  $$I \cong (\bigoplus_{v\in \Upsilon_1}M_{n(v)}(K)) \oplus (\bigoplus_{c\in \Upsilon_2}M_{m(c)}(K[x, x^{-1}])),$$  where $n(v)$ is the number of all paths ending in the sink $v$, and $m(c)$ is the number of all paths ending in a fixed vertex of the cycle $c$ which do not contain the cycle itself. Since $L_K(E)$ is Lie nilpotent, $I$ is also Lie nilpotent. Then, by Lemma \ref{mat-sol},  we have~$n(v)=1$ for every $v\in \Upsilon_1$, and $m(c) = 1$ for  every $c\in \Upsilon_2$. In other words, every vertex $v\in \Upsilon_1$ is isolated and every cycle $c\in \Upsilon_2$ is a loop such that~$s^{-1}(s(c))=r^{-1}(s(c))=\{c\}$. This implies that $E$ is exactly  a disjoint union of isolated vertices and loops.
	
(2)$\Longrightarrow$(3). It is obvious.

(3)$\Longrightarrow$(1). It follows from the note that $L_K(E)$ is commutative, thus finishing the proof.	
\end{proof}

It is well known that the Lie solvability of  a Lie algebra~$\mathcal{L}$ does not imply the Lie nilpotency of~$[\mathcal{L},\mathcal{L}]$ in general.  As another corollary of Theorem \ref{sol-LPAs}, we conclude the article with the following interesting result, showing the above fact is true for the associated Lie algebra of a Leavitt path algebra.


\begin{cor}\label{nil-LPAs1}
For a field $K$ and a graph $E$, $L_K(E)$ is Lie solvable if and only if $[L_{K}(E),L_{K}(E)]$ is Lie nilpotent.
\end{cor}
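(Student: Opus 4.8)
The plan is to prove the two implications separately, leveraging the structural results already established, principally Theorem~\ref{sol-LPAs} and Lemma~\ref{mat-sol}. For the direction ``$[L_K(E),L_K(E)]$ Lie nilpotent $\Rightarrow$ $L_K(E)$ Lie solvable'', I would invoke only general Lie theory: a Lie nilpotent algebra is Lie solvable, and a Lie-admissible algebra $\mathcal{L}$ is Lie solvable if and only if its derived algebra $[\mathcal{L},\mathcal{L}]$ is (the fact already used in the proof of Theorem~\ref{sol-LPAs}, since the derived series of $\mathcal{L}$ from the first step onward is the derived series of $[\mathcal{L},\mathcal{L}]$). Hence if $[L_K(E),L_K(E)]$ is Lie nilpotent it is Lie solvable, and therefore $L_K(E)$ is Lie solvable. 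This direction needs no information about the graph.

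For the converse, I would suppose $L_K(E)$ is Lie solvable and split on the characteristic. If $\chart(K)\neq 2$, then by Theorem~\ref{sol-LPAs}(2) $L_K(E)\cong K^{(\Upsilon_1)}\oplus K[x,x^{-1}]^{(\Upsilon_2)}$ is commutative, so $[L_K(E),L_K(E)]=(0)$ is trivially Lie nilpotent. If $\chart(K)=2$, I would reuse the ideal $I$ from the proof of Theorem~\ref{sol-LPAs}(1): it satisfies $I\cong(\bigoplus_{v\in\Upsilon_1}M_{n(v)}(K))\oplus(\bigoplus_{c\in\Upsilon_2}M_{m(c)}(K[x,x^{-1}]))$ with every $n(v),m(c)\le 2$, and $L_K(E)/I\cong K^{(S)}$ is commutative, where $S$ is the set of infinite emitters. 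Consequently $[L_K(E),L_K(E)]\subseteq I$. Writing $L_K(E)=I\oplus V$ as vector spaces, where $V$ is spanned by the (orthogonal, commuting) infinite-emitter idempotents, one gets $[V,V]=0$ and hence $[L_K(E),L_K(E)]=[I,I]+[I,V]$.

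The key step is then to show $[I,V]\subseteq[I,I]$, so that $[L_K(E),L_K(E)]=[I,I]$. For a spanning element $pq^*$ of $I$ and an infinite emitter $v$, a direct computation gives $[pq^*,v]=pq^*(\delta_{s(q),v}-\delta_{s(p),v})$, which is nonzero only when exactly one of $s(p),s(q)$ equals $v$; in that case $p\ne q$, so $pq^*$ is an off-diagonal matrix unit inside a single block, and off-diagonal matrix units lie in $[M_n(\mathcal{A}),M_n(\mathcal{A})]$. This yields $[I,V]\subseteq[I,I]$. Finally, since $I$ decomposes as a direct sum of the pairwise-commuting ideals $M_{n_\alpha}(\mathcal{A}_\alpha)$, one has $[I,I]=\bigoplus_\alpha[M_{n_\alpha}(\mathcal{A}_\alpha),M_{n_\alpha}(\mathcal{A}_\alpha)]$, and by Lemma~\ref{mat-sol}(2) each summand (with $n_\alpha\le 2$) is Lie nilpotent of class at most $2$. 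A direct sum of Lie nilpotent algebras of uniformly bounded class is Lie nilpotent, so $[I,I]$, and with it $[L_K(E),L_K(E)]$, is Lie nilpotent.

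I expect the main obstacle to be precisely the char $2$ converse, namely verifying $[I,V]\subseteq[I,I]$: one must check that the brackets of $I$ with the infinite-emitter idempotents, which are \emph{not} themselves in $I$, do not escape the derived subalgebra $[I,I]$. The weaker containment $[L_K(E),L_K(E)]\subseteq I$ alone is insufficient, since $[I,[I,I]]=[I,I]$ (each block identity is central but the off-diagonal units regenerate $[I_\alpha,I_\alpha]$), so the lower central series of $[L_K(E),L_K(E)]$ would not visibly descend inside $I$; one genuinely needs the sharper containment inside $[I,I]$, together with the fact that $[[I,I],[I,I]]$ lands in the central part $\bigoplus_\alpha \mathcal{A}_\alpha(E_{11}+E_{22})$, to force nilpotency.
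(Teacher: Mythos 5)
Your proposal is correct and follows essentially the same route as the paper: both directions are handled identically in spirit, and in the characteristic-$2$ case both arguments reduce to showing $[L_K(E),L_K(E)]=[I,I]$ for the same ideal $I$ and then invoke the nilpotency of $[\mathfrak{gl}(2,\mathcal{A}),\mathfrak{gl}(2,\mathcal{A})]$ from Lemma~\ref{mat-sol}. Your verification of $[I,V]\subseteq[I,I]$ via $[pq^*,v]=(\delta_{s(q),v}-\delta_{s(p),v})pq^*$ and off-diagonal matrix units is just a cleaner packaging of the paper's explicit case analysis (its commutators $[r(e_i),e_i^*]$, $[r(e_i),e_if^{n_i-m_i}]$ are precisely your $[E_{jj},E_{ij}]$); the only nitpick is that off-diagonality should be deduced from $s(p)\neq s(q)$ (which forces the indexing paths to differ), not merely from $p\neq q$.
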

\begin{proof}
Assume that~$L_{K}(E)$ is Lie solvable. Consider the following two cases.

{\it Case} 1. $\chart(K)\neq 2$. By Theorem~\ref{sol-LPAs}, $L_{K}(E)$ is commutative, and so $[L_{K}(E),L_{K}(E)]= 0$, in particular,  $[L_K(E), L_K(E)]$ is Lie nilpotent.

{\it Case} 2. $\chart(K) = 2$. By repeating the method described in the proof of Theorem \ref{sol-LPAs}, we note that $E$ is a no-exit graph satisfying the following condition: every vertex $v$ in $E$ is either a sink, or in a cycle whose length is at most $2$, or for each $e\in s^{-1}(v)$, $r(e)$ is either a sink with $r^{-1}(r(e)) = \{e\}$ or in a loop $f$ with $r^{-1}(r(e)) = \{e, f\}$. In particular, no paths of positive length end at an infinite emitter.
 Let $I$ be  the ideal of~$L_K(E)$ generated by~$\Upsilon_1 \cup \{c^0\mid c\in \Upsilon_2\}$, where~$\Upsilon_1$ is the set of all sinks in~$E$ and $\Upsilon_2$ is the set of all distinct cycles in $E$. Then we have
\begin{equation}\label{coro-i-f}
  I \cong (\bigoplus_{v\in \Upsilon_1}M_{n(v)}(K)) \oplus (\bigoplus_{c\in \Upsilon_2}M_{m(c)}(K[x, x^{-1}])),
\end{equation} where  $n(v)$ is both at most $2$ and the number of all paths ending in the sink $v$, and $m(c)$ is both at most $2$ and the number of all paths ending in a fixed vertex of the cycle $c$ which do not contain the cycle itself. As was shown in the proof of Theorem \ref{sol-LPAs} (1), $v\in I$ for every vertex $v\in E^0$ with $|s^{-1}(v)| < \infty$, and $e,\, e^*\in I$ for every $e\in E$. Consequently, $p$ and $p^*\in I$ for every path $p$ of positive length in $E$.

We next claim that $[L_K(E), L_K(E)] = [I, I]$. It  suffices  to prove that $$[L_K(E), L_K(E)] \subseteq [I, I].$$
Let $\alpha$ be an arbitrary element in $L_K(E)$.  Write
$\alpha = \sum^m_{j= 1}k_jv_j+
\sum^n_{i=1}l_i p_iq^*_i$, where $k_j,\, l_i\in K$, $v_j$'s are distinct vertices in $E^0$,  and $p_i$ and $q_i$ are paths in $E$ such that $r(p_i) = r(q_i)$, and for each $1\le i\le n$, either $|p_i| \ge 1$ or $|q_i| \ge 1$. By the above note,  $ p_iq^*_i \in I$ for all $i$. Let $v\in E^0$. We then have $[\sum^m_{j= 1}k_jv_j, v] = 0$ and
$$[\alpha, v] = [\sum^m_{j= 1}k_jv_j, v] + [\sum^n_{i=1}l_i p_iq^*_i, v] =  [\sum^n_{i=1}l_i p_iq^*_i, v]=\sum^n_{i=1}l_i (p_iq^*_iv - vp_iq^*_i).$$
If $v$ is either regular  or is a sink, then we have  $v\in I$, and so
$$[\alpha, v] =  [\sum^n_{i=1}l_i p_iq^*_i, v]\in [I, I]. $$
Now we assume that $v$ is an infinite emitter. If~$s(p_i)=s(q_i)=v$, or if~$v$ is neither~$s(p_i)$ nor~$s(q_i)$,  then we have
$$p_iq^*_iv - vp_iq^*_i=0\in [I, I]. $$

We first consider the case when~$s(q_i)=v$ and~$s(p_i)\neq v$. If $q_i = v$, then we have $r(p_i) = r(q_i) = v$ and $|p_i|\ge 1$, that means,~$p_i$ is a path of positive length that ends at an infinite emitter, a contradiction. This implies that $|q_i| \ge 1$. Therefore, we have either $q_i = e_i$ with $s(e_i) = v$ and $r(e_i)$ is a sink such that $r^{-1}(r(e_i)) = \{e_i\}$, or  $q_i = e_if^{n_i}$, where $n_i\ge 0$, $e_i\in E^1$ with $s(e_i) = v$,  $f$ is a loop, and $r^{-1}(r(e_i)) = \{e_i, f\}$, where by~$f^0$ we mean~$r(e_i)$.  For the first subcase, since~$s(p_i)\neq v $ and~$r(p_i)= r(q_i)$,  we deduce~$p_i=r(q_i)\neq v$ and
$$p_iq^*_iv - vp_iq^*_i=q^*_i=e^*_i
=r(e_i)e^*_i-e^*_ir(e_i)=[r(e_i),e^*_i]\in [I, I].$$
For the second subcase,  since~$s(p_i)\neq v$ and~$r_{p_i}=r(q_i)=r(e_i)$, we deduce~$p_i=f^{m_i}$ for some integer~$m_i\geq 0$. Denote $(f^*)^n$ by $f^{-n}$ for every positive integer~$n$. Then since~$r(e_i)$ lies in~$I$, we know~$f^{m}e^*_i=r(e_i)f^{m}e^*_i\in I$ for every integer~$m$. Thus we obtain
$$p_iq^*_iv - vp_iq^*_i=f^{m_i-n_i}e^*_i=[r(e_i),f^{m_i-n_i}e^*_i]\in [I, I].$$

Then we consider the case when~$s(p_i)=v$ and $s(q_i)\neq v$. The proof of this case is almost the same to the above case. And just for the reader's convenience, we still offer a detailed proof. If~$p_i=v$, then we have $ r(q_i)=r(p_i) = v$ and $|q_i|\ge 1$, which means that~$q_i$ is a path of positive length that ends at an infinite emitter, a contradiction. This implies that $|p_i| \geq 1$. If~$p_i=e_i\in E^1$ such that~$r(e_i)$ is a sink, then we deduce~$q_i=r(e_i)$ and
$$p_iq^*_iv - vp_iq^*_i=-p_i=-e_i
=r(e_i)e_i-e_ir(e_i)=[r(e_i),e_i]\in [I, I].$$
If~$p_i = e_if^{n_i}$, where $n_i\ge 0$, $e_i,f\in E^1$ and $f$ is a loop, then~$q_i=f^{m_i}$ for some integer~$m_i\geq 0$, and so we obtain
$$p_iq^*_iv - vp_iq^*_i=-e_i f^{n_i-m_i}=[r(e_i),e_i f^{n_i-m_i}]\in [I, I].$$
Therefore, for every~$v\in E^0$, we have~$[\alpha, v] =  [\sum^n_{i=1}l_i p_iq^*_i, v]\in [I, I]$,
and
$$[L_{K}(E),v]\subseteq [I,I].$$
Since~$L_{K}(E)=\sum_{v\in E^0}Kv+I$, we deduce
\begin{align*}
  [L_{K}(E),L_{K}(E)]&=[\sum_{v\in E^0}Kv+I,\sum_{v\in E^0}Kv+I]&\\
&=[\sum_{v\in E^0}Kv,\sum_{v\in E^0}Kv+I]+[I,I]\subseteq [I,I],&
\end{align*}
 thus finishing the proof of the claim.

As was shown in  the proof of Lemma~\ref{mat-sol} (2) that  $[\mathfrak{gl}(2,K), \mathfrak{gl}(2,K)]$ and $[\mathfrak{gl}(2,K[x,x^{-1}]), \mathfrak{gl}(2,K[x,x^{-1}])]$ are nilpotent, and so by Equation~\eqref{coro-i-f}, $[I, I]$ is Lie nilpotent. This implies that $[L_{K}(E),L_{K}(E)]$ is also Lie nilpotent, thus finishing the proof.
\end{proof}

\section{Acknowledgement} The authors thank Yuqun Chen for valuable suggestions on the exposition of the paper.

\newcommand{\noopsort}[1]{}

\end{document}